\documentclass[a4paper,10pt]{amsart}

\usepackage[applemac]{inputenc}
\usepackage{amsmath, amsthm, amssymb}
\usepackage[all]{xy}
\usepackage{marginnote}
\usepackage{color}

\theoremstyle{plain}
\newtheorem{theorem}{Theorem}[section]
\newtheorem*{theorem*}{Theorem}

\newtheorem{proposition}[theorem]{Proposition}
\newtheorem{corollary}[theorem]{Corollary}
\newtheorem{lemma}[theorem]{Lemma}

\theoremstyle{definition}

\theoremstyle{remark}
\newtheorem{remark}[theorem]{Remark}

\DeclareMathOperator{\tr}{tr}
\DeclareMathOperator{\rk}{rk}
\DeclareMathOperator{\Id}{Id}

\def\Xint#1{\mathchoice
{\XXint\displaystyle\textstyle{#1}}%
{\XXint\textstyle\scriptstyle{#1}}%
{\XXint\scriptstyle\scriptscriptstyle{#1}}%
{\XXint\scriptscriptstyle\scriptscriptstyle{#1}}%
\!\int}
\def\XXint#1#2#3{{\setbox0=\hbox{$#1{#2#3}{\int}$ }
\vcenter{\hbox{$#2#3$ }}\kern-.6\wd0}}

\def\dashint{\Xint-}

\begin{document}
\bibliographystyle{alpha}

\title{Segre forms and Kobayashi--L\"Ubke inequality} 

\author{Simone Diverio }
\address{Simone Diverio \\ CNRS, Institut de Mathématiques de Jussieu--Paris Rive Gauche, UMR7586, Sorbonne Universités, UPMC Univ Paris 06, Univ Paris Diderot, Sorbonne Paris Cité, F-75005, Paris, France.}
\curraddr{Laboratorio Fibonacci, UMI 3483, Centro di Ricerca Matematica Ennio de Giorgi, Collegio Puteano, Scuola Normale Superiore, Piazza dei Cavalieri 3, I-56100 PISA.}
\email{simone.diverio@imj-prg.fr}

\thanks{The author is partially supported by the ANR Project \lq\lq GRACK\rq\rq}
\keywords{Segre form, Hermite--Einstein vector bundle, Kobayashi--L\"ubke inequality, projectivized vector bundle}
\subjclass[2010]{Primary 53C07; Secondary 53C56.}
\date{\today}


\begin{abstract}
Starting from the description of Segre forms as direct images of (powers of) the first Chern form of the (anti)tautological line bundle on the projectivized bundle of a holomorphic hermitian vector bundle, we derive a version of the pointwise Kobayashi--Lübke inequality.
\end{abstract}

\maketitle

\section{Introduction}

Let $E\to X$ be a rank $r$ holomorphic vector bundle over a compact complex manifold $X$ of dimension $n$, and let $\pi\colon P(E)\to X$ be the projectivized bundle of lines of $E$. Next, let $\mathcal O_E(-1)\subset\pi^* E$ be the tautological line bundle on $P(E)$ and set $u=c_1(\mathcal O_E(1))\in H^2(P(E),\mathbb Z)$. Then, the cohomology algebra $H^\bullet(P(E),\mathbb Z)$ can be identified with the algebra $H^\bullet(X,\mathbb Z)[u]$ with the unique relation
\begin{equation}\label{defchern}
u^r+\pi^*c_1(E)\cdot u^{r-1}+\cdots+\pi^*c_r(E)=0.
\end{equation}
It is well known that this can even be used in order to define the Chern classes $c_k(E)\in H^{2k}(X,\mathbb Z)$ of $E$. Let $c_\bullet(E)=1+c_1(E)+\cdots+c_r(E)\in H^\bullet(X,\mathbb Z)$ be the total Chern class of $E$. This is an invertible element of the cohomology algebra of $X$ and its inverse is by definition the total Segre class $s_\bullet(E)=1+s_1(E)+\cdots+s_n(E)$ of $E$. One thus finds
$$
\begin{aligned}
& s_1(E)=-c_1(E),\\
& s_2(E)=c_1(E)^2-c_2(E),\\
& s_3(E)=-c_1(E)^3+2\,c_1(E)\cdot c_2(E)-c_3(E),\\
&\dots
\end{aligned}
$$
and so forth. It is not difficult to see, using (\ref{defchern}), that one can recover Segre classes directly from the projectivized bundle of $E$ by a push-forward formula in cohomology, namely $s_k(E)=\pi_*u^{r-1+k}$.

Now, looking at these classes in the real cohomology algebra, the Chern--Weil theory gives us a way to represent them as closed $2k$-forms, for instance once the holomorphic vector bundle $E$ is endowed with a hermitian metric $h$: if $D_{E,h}$ is the Chern connection of $(E,h)$, and $\Theta(E,h)=D_{E,h}^2\in C^\infty_{1,1}(X,\operatorname{End}(E))$ its curvature, then the corresponding Chern forms $c_k(E,h)$ are computed using formally the identity
$$
\det\left(\operatorname{Id}+\frac{it}{2\pi}\,\Theta(E,h)\right)=\sum_{k=0}^r c_k(E,h)\, t^k,
$$
so that, $c_k(E,h)=\tr\bigl(\Lambda^k\frac i{2\pi}\Theta(E,h)\bigr)$.
Correspondingly, we can consider the Segre forms $s_k(E,h)$ built up starting from the Chern forms, so that for instance
$$
s_1(E,h)=-c_1(E,h),\quad s_2(E,h)=c_1(E,h)\wedge c_1(E,h)-c_2(E,h),\quad\dots
$$
and so on. These are, of course, special representatives of the Segre classes.

Once $E$ is endowed with a hermitian metric, we can naturally equip $\mathcal O_E(-1)\subset\pi^* E$ with a hermitian metric, which we still call $h$. Computing the corresponding Chern curvature of $\mathcal O_E(1)$ thus gives a special representative $\Xi:=\frac i{2\pi}\Theta(\mathcal O_E(1),h^{-1})$ of $u$. Finally, since $\pi\colon P(E)\to X$ is a proper submersion, the direct image $\pi_*(\Xi^{r-1+k})$ is a smooth closed $2k$-form, which clearly represents $s_k(E)$ so that \textsl{a priori} it differs from $s_k(E,h)$ by an exact form. The first remark is that this exact form is indeed zero.

\begin{proposition}\label{segre}
For each $k=0,\dots,n$, the equality
$$
\pi_*(\Xi^{r-1+k})=s_k(E,h).
$$
holds, where $s_0(E,h)$ is the function on $X$ constantly equal to $1$.
\end{proposition}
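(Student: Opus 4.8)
The plan is to reduce, by the splitting principle, to the case where $E$ is an orthogonal direct sum of hermitian line bundles, and then to turn the left-hand side into an explicit fibre integral over $\mathbb P^{r-1}$ which is nothing but a classical Dirichlet (Beta) integral. For the reduction, fix $x\in X$ and choose a holomorphic frame of $E$ near $x$ that is normal for $h$ at $x$ (orthonormal at $x$, with vanishing first-order jet of the metric matrix there). Writing $\Xi=\frac i{2\pi}\partial\bar\partial\log|w|^2_h$ in the induced homogeneous fibre coordinates and restricting to $\pi^{-1}(x)$, normality kills the mixed $dz\wedge d\bar w$ terms, and one sees that $\Xi|_{\pi^{-1}(x)}$, hence $\pi_*(\Xi^{r-1+k})_x$, depends on the $2$-jet of $h$ at $x$ only through the curvature $\Theta(E,h)_x$, via a $\mathrm{GL}_r(\mathbb C)$-conjugation-invariant polynomial of degree $k$ in the matrix of commuting $(1,1)$-forms $B:=\frac i{2\pi}\Theta(E,h)_x$; the same holds for $s_k(E,h)_x$ by Chern--Weil theory. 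An invariant polynomial on matrices being determined by its restriction to diagonal matrices, and any prescribed diagonal curvature being realisable by a direct sum of line bundles over a suitable product of projective spaces (on which the relevant first Chern forms satisfy no algebraic relation in degree $\le k$), it suffices to treat $E=\bigoplus_{i=1}^r L_i$ with the direct sum of hermitian metrics.

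\emph{The split case.} Put $x_i:=c_1(L_i,h_i)$. On one hand $\det\bigl(\operatorname{Id}+\tfrac{it}{2\pi}\Theta(E,h)\bigr)=\prod_{i=1}^r(1+tx_i)$, so the total Segre form is $\prod_i(1+x_i)^{-1}$ and $s_k(E,h)=(-1)^k h_k(x_1,\dots,x_r)$, the $k$-th complete homogeneous symmetric form. On the other hand, around a point $x_0$ choose holomorphic frames $e_i$ of $L_i$ with $|e_i|^2_{h_i}=e^{-\varphi_i}$, $\varphi_i(x_0)=0$, $d\varphi_i(x_0)=0$; then $|w|^2_h=\sum_i|w_i|^2e^{-\varphi_i}$ in homogeneous fibre coordinates $[w_1:\dots:w_r]$, and a direct computation (the first-order terms dropping out at $x_0$) gives
\[
\Xi|_{\pi^{-1}(x_0)}=\omega_{FS}-\sum_{i=1}^r p_i\,\pi^*x_i,\qquad p_i:=\frac{|w_i|^2}{|w_1|^2+\dots+|w_r|^2},
\]
where $\omega_{FS}=\frac i{2\pi}\partial\bar\partial\log|w|^2$ is the Fubini--Study form on $\mathbb P^{r-1}=\mathbb P(E_{x_0})$, normalised so that $\int_{\mathbb P^{r-1}}\omega_{FS}^{r-1}=1$.

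\emph{Fibre integration.} Since $\omega_{FS}^{p}=0$ on $\mathbb P^{r-1}$ for $p\ge r$ and since $\pi_*$ selects the part of top vertical degree $2(r-1)$, only one term of the expansion of $(\omega_{FS}-\sum_ip_i\pi^*x_i)^{r-1+k}$ survives, and
\[
\pi_*(\Xi^{r-1+k})=(-1)^k\binom{r-1+k}{k}\sum_{|\alpha|=k}\binom{k}{\alpha}\Bigl(\int_{\mathbb P^{r-1}}p^\alpha\,\omega_{FS}^{r-1}\Bigr)x^\alpha .
\]
Under the moment map $[w]\mapsto(p_1,\dots,p_r)$ the measure $\omega_{FS}^{r-1}$ pushes forward to $(r-1)!$ times the standard Lebesgue measure $dp_1\cdots dp_{r-1}$ on the simplex, so the Dirichlet integral gives $\int_{\mathbb P^{r-1}}p^\alpha\,\omega_{FS}^{r-1}=\bigl(\prod_i\alpha_i!\bigr)\big/\bigl(r(r+1)\cdots(r+k-1)\bigr)$. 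Substituting this, and using $\binom{r-1+k}{k}=r(r+1)\cdots(r+k-1)/k!$ together with $\binom{k}{\alpha}=k!/\prod_i\alpha_i!$, all combinatorial factors cancel and $\pi_*(\Xi^{r-1+k})=(-1)^k\sum_{|\alpha|=k}x^\alpha=(-1)^kh_k(x_1,\dots,x_r)=s_k(E,h)$; for $k=0$ this is just $\int_{\mathbb P^{r-1}}\omega_{FS}^{r-1}=1$.

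The genuinely delicate part is the bookkeeping in the first step — showing through the normal frame that both sides are the same type of conjugation-invariant polynomial in $\Theta(E,h)_x$, and computing $\Xi$ along a fibre; the fibre integration itself is then routine. One could also bypass the splitting principle, computing $\Xi|_{\pi^{-1}(x)}$ for arbitrary curvature and replacing the Dirichlet integral by the Wick-type identity $\int_{\mathbb P^{r-1}}\frac{\langle A_1w,w\rangle\cdots\langle A_kw,w\rangle}{|w|^{2k}}\,\omega_{FS}^{r-1}=\frac1{r(r+1)\cdots(r+k-1)}\sum_{\sigma\in S_k}\prod_{c\text{ cycle of }\sigma}\tr\bigl(\prod_{m\in c}A_m\bigr)$, which one then matches with the power-sum expansion of the total Segre form $\exp\bigl(\sum_{m\ge1}\tfrac{(-1)^m}{m}\tr B^m\bigr)$.
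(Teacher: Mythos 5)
Your argument is correct and follows essentially the same route as the paper: write $\Xi=\omega_{FS}-\vartheta$ along a fibre via a normal frame, isolate by degree count the unique term surviving fibre integration, evaluate the monomial moments $\int_{\mathbb P^{r-1}} p^\alpha\,\omega_{FS}^{r-1}=\prod_i\alpha_i!\,(r-1)!/(r-1+k)!$, and recognise the complete homogeneous symmetric polynomial $(-1)^k h_k=s_k$. The only cosmetic differences are that the paper computes these moments by a Gaussian integral over $\mathbb C^r$ rather than your Dirichlet integral on the simplex, and handles a general curvature matrix by formally diagonalizing it in an elementary lemma on invariant functions of self-adjoint operators rather than by your splitting-principle reduction to an actual direct sum of line bundles.
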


An analogous proposition has been firstly proven in \cite[Proposition 6]{Mou04}. The reader can also find this statement, for $X$ a projective manifold, in \cite[Proposition 3.1]{Gul12}. By the pointwise nature of its proof, Proposition \ref{segre} is indeed valid for any complex manifold: we shall give our proof of this fact in the next section.

The innocent-sounding proposition above has in fact a certain number of interesting consequences. The first one we would like to mention, which has already been observed in \cite{Gul12}, is the following.

\begin{theorem*}[D. Guler \cite{Gul12}]\label{Guler}
Let $(E,h)$ be a Griffiths positive holomorphic hermitian vector bundle on a projective manifold $X$. Then, the signed Segre form $(-1)^k\,s_k(E,h)$ is a positive $(k,k)$-form for each $k=1,\dots,n$.
\end{theorem*}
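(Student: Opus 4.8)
The plan is to extract the sign directly from Proposition~\ref{segre} by a completely pointwise argument: positivity of a $(k,k)$-form is checked fibrewise, so it suffices to show, for each $x_0\in X$, that $(-1)^ks_k(E,h)_{x_0}=(-1)^k\,\pi_*(\Xi^{r-1+k})_{x_0}$ is a positive $(k,k)$-form on $T_{x_0}X$. I would work in a holomorphic frame $(e_\alpha)_{1\le\alpha\le r}$ of $E$ which is normal at $x_0$, so that in coordinates $z$ centered at $x_0$ one has $h_{\alpha\bar\beta}(z)=\delta_{\alpha\beta}-\sum_{j,k}c_{\alpha\bar\beta j\bar k}\,z^j\bar z^k+O(|z|^3)$, where $\sum_{j,k}c_{\alpha\bar\beta j\bar k}\,dz^j\wedge d\bar z^k$ are the entries of the Chern curvature $\Theta(E,h)_{x_0}$, and in the induced homogeneous fibre coordinates on $P(E)$. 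In the affine chart $\{w^1=1\}$ around $[\mathbb Ce_1]$ one has $\Xi=\tfrac i{2\pi}\partial\bar\partial\log\|s\|_h^2$, where $s=e_1+\sum_{j\ge2}w^je_j$ is the tautological section, so that $\|s\|_h^2=(1+|w|^2)-\sum_{\alpha,\beta,j,k}c_{\alpha\bar\beta j\bar k}\,c^\alpha\bar c^\beta\,z^j\bar z^k+O(|z|^3)$ with $(c^\alpha)=(1,w^2,\dots,w^r)$.

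The crucial step is to compute $\Xi$ \emph{along the fibre} $\mathbb P(E_{x_0})$, i.e.\ at $z=0$. Expanding the logarithm gives $\log\|s\|_h^2=\log(1+|w|^2)-\sum_{j,k}G_{j\bar k}(w,\bar w)\,z^j\bar z^k+O(|z|^3)$ with $G_{j\bar k}=(1+|w|^2)^{-1}\sum_{\alpha,\beta}c_{\alpha\bar\beta j\bar k}\,c^\alpha\bar c^\beta$, and every term of $\partial\bar\partial$ of the middle summand that is \emph{not} of pure type $dz\wedge d\bar z$ carries a bare factor $z^j$ or $\bar z^k$, hence dies at $z=0$; since $\partial\bar\partial$ of the $O(|z|^3)$ term also vanishes there, one gets the clean splitting
\[
\Xi|_{z=0}=\omega_{FS}+\Psi_{[v]},\qquad\Psi_{[v]}=-\frac i{2\pi}\,\frac{\langle\Theta(E,h)\,v,v\rangle}{|v|^2},
\]
where $\omega_{FS}$ is the Fubini--Study form of the fibre (normalized to total mass $1$), $v=\sum_\alpha c^\alpha e_\alpha$ spans the line attached to $w$, and $1+|w|^2=|v|^2$ in the normal frame; moreover $\omega_{FS}$ and $\Psi_{[v]}$ involve disjoint sets of variables (fibre, resp.\ base). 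This fibrewise identity is a one-order refinement of the pointwise computation underlying Proposition~\ref{segre}, and I expect it to be the only genuinely delicate point.

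Granting the splitting, the rest is bookkeeping with the fibre integral. Since $\omega_{FS}$ and $\Psi_{[v]}$ are commuting $(1,1)$-forms in disjoint variables, $\Xi^{r-1+k}|_{z=0}=\sum_l\binom{r-1+k}{l}\omega_{FS}^{\,l}\wedge\Psi_{[v]}^{\,r-1+k-l}$, and $\pi_*$ kills every summand except the one with $l=r-1$: those with $l>r-1$ because $\dim_{\mathbb C}\mathbb P(E_{x_0})=r-1$, those with $l<r-1$ because a nonzero power of $\Psi_{[v]}$ then survives contraction with the horizontal directions and restricts to $0$ on $\{z=0\}$. Hence by Proposition~\ref{segre}
\[
s_k(E,h)_{x_0}=\binom{r-1+k}{k}\int_{\mathbb P(E_{x_0})}\Bigl(-\frac i{2\pi}\,\frac{\langle\Theta(E,h)\,v,v\rangle}{|v|^2}\Bigr)^{\!k}\,d\mu_{FS}([v]),
\]
with $d\mu_{FS}=\omega_{FS}^{\,r-1}$ the Fubini--Study probability measure on $\mathbb P(E_{x_0})$ (one checks the constant against $s_0=1$ and $s_1=-c_1(E,h)$). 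Now if $(E,h)$ is Griffiths positive, then for every line $[v]$ the $(1,1)$-form $\tfrac i{2\pi}\langle\Theta(E,h)v,v\rangle/|v|^2$ is positive on $T_{x_0}X$, so its $k$-th power is a positive $(k,k)$-form; multiplying the displayed identity by $(-1)^k$ turns the integrand into exactly that positive $k$-th power, and averaging this positive family of positive forms against $d\mu_{FS}$ and scaling by the positive constant $\binom{r-1+k}{k}$ exhibits $(-1)^ks_k(E,h)_{x_0}$ as a positive $(k,k)$-form, which is the assertion.
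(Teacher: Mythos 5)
Your argument is correct and is essentially the one the paper intends: the paper presents Guler's theorem as an immediate consequence of Proposition~\ref{segre}, whose proof in Section~2 contains exactly your fibrewise splitting $\Xi=\omega^{FS}-\vartheta^E_h$, the binomial/degree-count selecting the single surviving term, and the resulting formula $(-1)^k s_k(E,h)(x_0)=\binom{r-1+k}{k}\int_{P(E_{x_0})}\bigl(\vartheta^E_h(x_0,[v])\bigr)^k\,\omega_{FS}^{r-1}$, from which positivity follows since Griffiths positivity makes each $\vartheta^E_h(x_0,[v])$ a positive $(1,1)$-form. Your write-up simply makes explicit the deduction the paper leaves implicit, so there is nothing substantive to add.
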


For definitions and basic facts about Griffiths' positivity and positivity of forms we refer to the all-inclusive book \cite{Dem}. This result should be put in perspective with \cite{FL83}, where it is shown, among other things, that, given any ample vector bundle $E$, each Schur polynomial in the Chern classes of $E$ is positive whenever integrated over any subvariety of the right dimension. Since signed Segre classes are particular Schur polynomials in the Chern classes, the above theorem can be seen as a partial pointwise metric counterpart of the above-mentioned result of Fulton and Lazarsfeld. 

\smallskip

The second consequence we want to consider is a pointwise inequality \textsl{\`a la} Kobayashi--L\"ubke for Hermite--Einstein vector bundles. To this effect, let us fix some notations. Let $(E,h)\to (X,\omega)$ be Hermite--Einstein with respect to the K\"ahler metric $\omega$. We recall that this means that there exists a real number $\lambda_{E,[\omega]}$ (called the \emph{slope of $E$ with respect to $\omega$}), which is \textsl{a posteriori} uniquely determined by $c_1(E)$ and the cohomology class $[\omega]$ of $\omega$, such that the following identity of $\operatorname{End}(E)$-valued $(n,n)$-forms holds everywhere on $X$:
$$
\frac i{2\pi}\,\Theta(E,h)\wedge\frac{\omega^{n-1}}{(n-1)!}=\lambda_{E,[\omega]}\,\frac{\omega^n}{n!}\operatorname{Id}_E.
$$
To simplify notations, we shall suppress the subscript $[\omega]$ in $\lambda_{E,[\omega]}$, whenever the Kähler class has been fixed once and for all. Taking the trace of both sides with respect to $\operatorname{End}(E)$, one gets
\begin{equation}\label{lambda}
c_1(E,h)\wedge\omega^{n-1}=\lambda_E\,\frac rn\,\omega^n.
\end{equation}
By integrating over $X$, this proves the above assertion about the dependance of $\lambda_E$ only upon $c_1(E)$ and $[\omega]$ and shows the correlation between $\lambda_E$ and the slope $\mu_E$ in the setting of stable vector bundles:
$$
\lambda_E=\frac n{\int_X\omega^n}\,\underbrace{\frac{\int_X c_1(E,h)\wedge\omega^{n-1}}{r}}_{=\deg_\omega E/\rk E=:\mu_E}.
$$
Next, recall that the classical Kobayashi--Lübke inequality states:

\begin{theorem*}[Kobayashi--Lübke inequality]
If $E$ admits a Hermite--Einstein metric $h$ with respect to $\omega$, then
$$
\bigl((r-1)\,c_1(E,h)^2-2r\,c_2(E,h)\bigr)\wedge\omega^{n-2}\le 0
$$
at every point of $X$. Moreover, the equality holds if and only if
$$
\frac i{2\pi}\,\Theta(E,h)=\frac 1r\,c_1(E,h)\otimes\operatorname{Id}_E,
$$
that is, if and only if $E$ is projectively flat.
\end{theorem*}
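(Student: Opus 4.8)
The plan is to reduce the inequality to a pointwise statement in linear algebra. Write $\Theta_h:=\frac{i}{2\pi}\Theta(E,h)$ for the normalized Chern curvature, a Hermitian-symmetric $\operatorname{End}(E)$-valued $(1,1)$-form with $c_1(E,h)=\tr\Theta_h$. Reading off $c_2$ from $\det(\operatorname{Id}+t\,\Theta_h)=\sum_k c_k(E,h)\,t^k$ gives $2\,c_2(E,h)=c_1(E,h)^2-\tr(\Theta_h\wedge\Theta_h)$, hence
\begin{equation}\label{kl-rewrite}
(r-1)\,c_1(E,h)^2-2r\,c_2(E,h)=r\,\tr(\Theta_h\wedge\Theta_h)-c_1(E,h)^2.
\end{equation}
Now split $\Theta_h=\widetilde\Theta+\tfrac1r\,c_1(E,h)\operatorname{Id}_E$ into trace-free part plus trace part; since $c_1(E,h)$ is a scalar $2$-form it is central and $\tr\widetilde\Theta=0$, so $\tr(\Theta_h\wedge\Theta_h)=\tr(\widetilde\Theta\wedge\widetilde\Theta)+\tfrac1r c_1(E,h)^2$, and \eqref{kl-rewrite} collapses to the clean identity
\begin{equation}\label{kl-clean}
(r-1)\,c_1(E,h)^2-2r\,c_2(E,h)=r\,\tr(\widetilde\Theta\wedge\widetilde\Theta).
\end{equation}
(Equivalently, using $s_2(E,h)=c_1(E,h)^2-c_2(E,h)$ together with Proposition \ref{segre}, the left-hand side equals $2r\,\pi_*(\Xi^{r+1})-(r+1)\,\pi_*(\Xi^r)\wedge\pi_*(\Xi^r)$; but it is the trace-free-curvature form \eqref{kl-clean} that I would work with.)

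Next I would bring in the Hermite--Einstein hypothesis. Applying the contraction operator $\Lambda_\omega$ to the defining equation $\Theta_h\wedge\frac{\omega^{n-1}}{(n-1)!}=\lambda_E\,\frac{\omega^n}{n!}\operatorname{Id}_E$ gives $\Lambda_\omega\Theta_h=\lambda_E\operatorname{Id}_E$; taking the $\operatorname{End}(E)$-trace yields $\Lambda_\omega c_1(E,h)=r\lambda_E$, and therefore $\Lambda_\omega\widetilde\Theta=\lambda_E\operatorname{Id}_E-\tfrac1r\,r\lambda_E\operatorname{Id}_E=0$, that is, $\widetilde\Theta$ is $\omega$-primitive. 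So the whole statement reduces to the following pointwise fact: \emph{if $B$ is a Hermitian-symmetric, $\omega$-primitive $\operatorname{End}(E)$-valued $(1,1)$-form on an $n$-dimensional Hermitian vector space, then $\tr(B\wedge B)\wedge\omega^{n-2}\le 0$, with equality if and only if $B=0$.}

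To prove this I would fix a point, choose an $\omega$-orthonormal coframe $dz_1,\dots,dz_n$ and an $h$-orthonormal frame of $E$, and write $B=i\sum_{\alpha,\beta}B_{\alpha\bar\beta}\,dz_\alpha\wedge d\bar z_\beta$ with $B_{\alpha\bar\beta}\in\operatorname{End}(E)$, so that Hermitian symmetry reads $B_{\alpha\bar\beta}^{\,*}=B_{\beta\bar\alpha}$ and primitivity reads $\sum_\alpha B_{\alpha\bar\alpha}=0$. Expanding $B\wedge B$ and wedging with $\frac{\omega^{n-2}}{(n-2)!}$, using that $dz_\alpha\wedge dz_\gamma\wedge d\bar z_\beta\wedge d\bar z_\delta\wedge\frac{\omega^{n-2}}{(n-2)!}$ is non-zero only when $\{\alpha,\gamma\}=\{\beta,\delta\}$, a sign count gives
$$
\tr(B\wedge B)\wedge\frac{\omega^{n-2}}{(n-2)!}=\Bigl(\tr\bigl[(\textstyle\sum_\alpha B_{\alpha\bar\alpha})^2\bigr]-\sum_{\alpha,\beta}\tr(B_{\alpha\bar\beta}B_{\beta\bar\alpha})\Bigr)\,\frac{\omega^n}{n!}.
$$
The first term vanishes by primitivity; by Hermitian symmetry $\tr(B_{\alpha\bar\beta}B_{\beta\bar\alpha})=\tr(B_{\alpha\bar\beta}B_{\alpha\bar\beta}^{\,*})=\|B_{\alpha\bar\beta}\|^2\ge 0$, so the right-hand side is $-\bigl(\sum_{\alpha,\beta}\|B_{\alpha\bar\beta}\|^2\bigr)\frac{\omega^n}{n!}\le 0$, with equality exactly when every $B_{\alpha\bar\beta}$ vanishes, i.e. $B=0$. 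Applying this to $B=\widetilde\Theta$ and invoking \eqref{kl-clean} proves the inequality; the equality case follows because $\widetilde\Theta=0$ means precisely $\frac{i}{2\pi}\Theta(E,h)=\tfrac1r\,c_1(E,h)\operatorname{Id}_E$, which is the definition of projective flatness.

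The argument is largely routine; the one place that demands care is the multilinear-algebra identity in the last paragraph --- tracking signs when commuting holomorphic past anti-holomorphic differentials and when replacing $\wedge\,\omega^{n-2}$ by the sum over complementary multi-indices --- together with the (harmless) remark that the orthonormal-frame computation is legitimate because the resulting $(n,n)$-form is frame-independent. It is also worth recording explicitly that the splitting $\Theta_h=\widetilde\Theta+\tfrac1r c_1(E,h)\operatorname{Id}_E$ preserves Hermitian symmetry, since $c_1(E,h)$ is a real $(1,1)$-form, so $\widetilde\Theta$ does satisfy the hypotheses of the pointwise fact.
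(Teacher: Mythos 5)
Your argument is correct, but it is not the route the paper takes. The paper does not prove the classical Kobayashi--L\"ubke inequality directly: it first establishes the Segre-form inequality of Theorem \ref{KL} by working on $P(E)$ --- fiber-integrating powers of $\Xi=\frac{i}{2\pi}\Theta(\mathcal O_E(1),h^{-1})$ via Proposition \ref{segre}, translating the Hermite--Einstein condition into $\gamma_1(\vartheta^E_h/\omega)\equiv\lambda_E$, and maximizing $\gamma_2$ under that constraint (Lemma \ref{multlag}) --- and then recovers the classical statement by plugging the Hermite--Einstein bundle $(E^*\otimes E,\bar h^{-1}\otimes h)$ into (\ref{divin}), using $c_1(E^*\otimes E)=0$ and $c_2(E^*\otimes E)=2r\,c_2(E,h)-(r-1)\,c_1(E,h)^2$, so that the right-hand side vanishes and $s_2(E^*\otimes E)\wedge\omega^{n-2}\le 0$ is exactly the desired inequality, with equality forcing $E^*\otimes E$ to be flat, i.e.\ $E$ projectively flat. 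What you give instead is the classical direct proof (essentially L\"ubke's): the identity $(r-1)c_1^2-2rc_2=r\tr(\widetilde\Theta\wedge\widetilde\Theta)$ for the trace-free part $\widetilde\Theta$, primitivity of $\widetilde\Theta$ from the Hermite--Einstein equation, and the pointwise negativity of $\tr(B\wedge B)\wedge\omega^{n-2}$ for a Hermitian-symmetric primitive $B$; all three steps check out, including the coefficient identity $\tr(B\wedge B)\wedge\frac{\omega^{n-2}}{(n-2)!}=\bigl(\tr[(\sum_\alpha B_{\alpha\bar\alpha})^2]-\sum_{\alpha,\beta}\|B_{\alpha\bar\beta}\|^2\bigr)\frac{\omega^n}{n!}$ and the equality analysis. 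Your approach buys a short, self-contained proof needing no auxiliary bundle and no fiber integration; the paper's approach buys a proof ``very different in spirit from those already in the literature'' (which is its announced point), in which the inequality appears as a shadow of a convexity statement for symmetric functions of the eigenvalues of $\vartheta^E_h$ on $P(E)$, and which simultaneously yields the refined inequality (\ref{divin2}) that your method does not produce without further work.
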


Here comes our inequality, which is ---as we shall see--- a different incarnation of the classical Kobayashi--Lübke inequality, involving an extra term of the form $c_1(E,h)\wedge\omega^{n-1}$. 

\begin{theorem}\label{KL}
If $E$ admits a Hermite--Einstein metric $h$ with respect to $\omega$, then
\begin{equation}\label{divin}
s_2(E,h)\wedge\omega^{n-2}\le\lambda_E\,\frac{r+1}{2n}\,c_1(E,h)\wedge\omega^{n-1}
\end{equation}
at every point of $X$. Moreover, the equality holds if and only if
$$
\frac i{2\pi}\,\Theta(E,h)=\frac{\lambda_E}{n}\,\omega\otimes\operatorname{Id}_E,
$$
so that in particular $E$ is projectively flat and 
$$
c_1(E,h)=\lambda_E\,\frac rn\,\omega.
$$
\end{theorem}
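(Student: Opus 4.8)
The plan is to reduce \eqref{divin} to a pointwise statement and then combine the classical Kobayashi--L\"ubke inequality with an elementary Cauchy--Schwarz estimate on the eigenvalues of $c_1(E,h)$ relative to $\omega$. Fix $x_0\in X$ and choose holomorphic coordinates centered at $x_0$ together with an orthonormal frame of $E$ so that $\omega=\sum_{j=1}^n i\,dz_j\wedge d\bar z_j$ at $x_0$. Since $s_2(E,h)=c_1(E,h)^2-c_2(E,h)$, the algebraic identity of $(2,2)$-forms
$$
2r\,s_2(E,h)=(r+1)\,c_1(E,h)^2+\bigl((r-1)\,c_1(E,h)^2-2r\,c_2(E,h)\bigr)
$$
holds. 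Wedging with $\omega^{n-2}$ and applying the classical Kobayashi--L\"ubke inequality to the Hermite--Einstein bundle $(E,h)$, the last summand contributes a non-positive multiple of $\omega^n$; hence, at every point,
$$
2r\,s_2(E,h)\wedge\omega^{n-2}\le(r+1)\,c_1(E,h)^2\wedge\omega^{n-2}.
$$

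The second ingredient is the pointwise inequality
$$
c_1(E,h)^2\wedge\omega^{n-2}\le\frac{r\,\lambda_E}{n}\,c_1(E,h)\wedge\omega^{n-1}.
$$
To prove it, diagonalize the real $(1,1)$-form $\gamma:=c_1(E,h)$ with respect to $\omega$ at $x_0$, say $\gamma=\sum_j\gamma_j\,i\,dz_j\wedge d\bar z_j$. A direct computation gives $\gamma\wedge\omega^{n-1}=\frac1n\bigl(\sum_j\gamma_j\bigr)\omega^n$ and $\gamma^2\wedge\omega^{n-2}=\frac1{n(n-1)}\bigl((\sum_j\gamma_j)^2-\sum_j\gamma_j^2\bigr)\omega^n$, so the asserted inequality is exactly the Cauchy--Schwarz estimate $\sum_j\gamma_j^2\ge\frac1n(\sum_j\gamma_j)^2$, once we note, reading the trace identity \eqref{lambda} \emph{pointwise}, that $\sum_j\gamma_j=\operatorname{tr}_\omega c_1(E,h)=r\,\lambda_E$. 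Combining the two displayed inequalities and dividing by $2r$ yields \eqref{divin}.

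For the equality case, observe that equality in \eqref{divin} forces equality, at every point, in both intermediate inequalities. Equality in the classical Kobayashi--L\"ubke inequality means $\frac i{2\pi}\Theta(E,h)=\frac1r\,c_1(E,h)\otimes\operatorname{Id}_E$, while equality in the Cauchy--Schwarz step forces all the $\gamma_j$ to coincide, i.e. $c_1(E,h)=\frac1n(\sum_j\gamma_j)\,\omega=\frac{r\,\lambda_E}{n}\,\omega$. Substituting the latter into the former gives $\frac i{2\pi}\Theta(E,h)=\frac{\lambda_E}{n}\,\omega\otimes\operatorname{Id}_E$, whence $E$ is projectively flat and $c_1(E,h)=\frac{r\,\lambda_E}{n}\,\omega$. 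Conversely, if $\frac i{2\pi}\Theta(E,h)=\frac{\lambda_E}{n}\,\omega\otimes\operatorname{Id}_E$ then $c_1(E,h)=\frac{r\lambda_E}{n}\omega$ and $c_2(E,h)=\binom r2\frac{\lambda_E^2}{n^2}\omega^2$, and one checks directly that both intermediate inequalities become equalities, hence so does \eqref{divin}.

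I do not expect a serious obstacle: the argument is essentially the bookkeeping of two well-understood estimates. The point requiring the most care is the matching of normalizations --- between $c_1(E,h)^2\wedge\omega^{n-2}$, $c_1(E,h)\wedge\omega^{n-1}$ and $\omega^n$, and with the numerical factors appearing in \eqref{lambda} --- and, in the equality discussion, checking that the two separate equality conditions combine to give exactly $\frac i{2\pi}\Theta(E,h)=\frac{\lambda_E}{n}\,\omega\otimes\operatorname{Id}_E$ together with the (routine) converse. One could alternatively bypass the classical inequality and prove the pointwise estimate in one stroke by expanding $\frac i{2\pi}\Theta(E,h)=\sum_{j,k}A^{jk}\,i\,dz_j\wedge d\bar z_k$ into Hermitian matrix blocks $A^{jk}$ and estimating $s_2(E,h)\wedge\omega^{n-2}$ directly; this is the computation underlying the classical Kobayashi--L\"ubke proof and leads to the same linear-algebra inequality, so I would keep the shorter route above.
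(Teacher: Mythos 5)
Your argument is correct: the decomposition $2r\,s_2=(r+1)\,c_1^2+\bigl((r-1)\,c_1^2-2r\,c_2\bigr)$, the Cauchy--Schwarz estimate on the $\omega$-eigenvalues of $c_1(E,h)$ (using the pointwise trace identity (\ref{lambda}) to pin down $\sum_j\gamma_j=r\lambda_E$), and the matching of the two equality conditions all check out, and together they do yield (\ref{divin}) with the stated rigidity. However, this is not the route the paper takes for its main proof; it is essentially the alternative derivation sketched at the end of Section \ref{ineqproof} (there phrased via the primitive decomposition $c_1(E,h)=\eta+\tfrac{r}{n}\lambda_E\,\omega$ with $\eta^2\wedge\omega^{n-2}\le 0$, which is the same estimate as your Cauchy--Schwarz step), and which the author explicitly attributes to a referee. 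The paper's own proof instead works upstairs on $P(E)$: it writes $s_2(E,h)$ as the fiber integral $\pi_*\Xi^{r+1}$ (Proposition \ref{segre}), translates the Hermite--Einstein condition into the statement that $\gamma_1(\vartheta^E_h(x,[v])/\omega)\equiv\lambda_E$ on all of $P(E)$ (Proposition \ref{hepe}), bounds $\gamma_2$ subject to that constraint by the elementary Lemma \ref{multlag}, and pushes the resulting inequality of top forms down to $X$. The trade-off is significant: your proof is shorter but consumes the classical Kobayashi--L\"ubke inequality (including its equality case) as a black box, whereas the paper's proof is independent of it --- indeed the stated point of the paper is that Theorem \ref{KL}, applied to $(E^*\otimes E,\bar h^{-1}\otimes h)$, gives a \emph{new} proof of the classical inequality, a feature your argument necessarily forfeits.
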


\begin{remark}
Observe that in the case of equality one obtains in particular that the first Chern class of $E$ is a (real) multiple of a Kähler class. Thus, if $\lambda_E$ is different from zero, \textsl{i.e.} if $c_1(E)\cdot[\omega]^{n-1}\ne 0$, then either $\det E$ or $\det E^*$ is a positive line bundle and therefore $X$ is projective. In particular, over a \emph{non projective} compact Kähler manifold the inequality (\ref{divin}) is always strict somewhere, provided $c_1(E)\cdot[\omega]^{n-1}\ne 0$. Therefore, in this case, by integrating (\ref{divin}) over $X$, one always has a strict cohomological inequality
$$
s_2(E)\cdot[\omega]^{n-2}<\lambda_E\,\frac{r+1}{2n}\,c_1(E)\cdot[\omega]^{n-1}.
$$
\end{remark}

Perhaps the main contribution of this note is to have given a new proof of the Kobayashi--Lübke inequality, very different in spirit from those already in the literature. Our proof seems to us very natural and might, as well, hint some new insights. 

Observe that, thanks to (\ref{lambda}), we may restate the inequality (\ref{divin}) in the equivalent form
\begin{equation}\label{divin2}
s_2(E,h)\wedge\omega^{n-2}\le\lambda_E^2\,\frac{r(r+1)}{2n^2}\,\omega^{n}.
\end{equation}
In order to recover the classical Kobayashi--Lübke inequality from Theorem \ref{KL}, given $(E,h)\to(X,\omega)$ Hermite--Einstein, it suffices to do the following standard trick: plug in (\ref{divin}) the first two Chern forms of the Hermite--Einstein vector bundle $(E^*\otimes E,\bar h^{-1}\otimes h)$ which are given by
$$
c_1(E^*\otimes E,\bar h^{-1}\otimes h)=0,\quad c_2(E^*\otimes E,\bar h^{-1}\otimes h)=2r\,c_2(E,h)-(r-1)\,c_1(E,h)^2.
$$
As pointed out to us by an anonymous referee, it is also equally possible to derive formally (\ref{divin2}) from the classical Kobayashi--Lübke inequality (see the end of Section \ref{ineqproof} for the details).

Integrating over $X$ the inequality (\ref{divin2}) and taking into account the Kobayashi--Hitchin correspondence, one gets the following statement of a somewhat more algebraic flavor.

\begin{corollary}
Let $E\to X$ be a holomorphic vector bundle over a compact Kähler manifold $X$. Suppose that $E$ is (poly)stable with slope $\mu_E$ with respect some Kähler class $[\omega]$. Then, the following inequality holds in cohomology:
$$
\bigl(c_1(E)^2-c_2(E)\bigr)\cdot[\omega]^{n-2}\le \mu_E^2\,\frac{r(r+1)}{2[\omega]^n}.
$$
\end{corollary}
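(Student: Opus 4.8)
The plan is to combine Theorem \ref{KL} --- in its equivalent form (\ref{divin2}) --- with the Kobayashi--Hitchin correspondence, and then integrate over $X$.

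First, since $E$ is (poly)stable with respect to the K\"ahler class $[\omega]$, fix a K\"ahler form $\omega$ in that class. By the Kobayashi--Hitchin correspondence (Donaldson, Uhlenbeck--Yau, and the extension to polystable bundles over compact K\"ahler manifolds) $E$ carries a Hermite--Einstein metric $h$ with respect to $\omega$. Its slope satisfies $\lambda_E=\frac{n}{\int_X\omega^n}\,\mu_E$, as recalled in the Introduction, and $\int_X\omega^n=[\omega]^n$ depends only on the class; hence $\lambda_E^2=\frac{n^2}{([\omega]^n)^2}\,\mu_E^2$.

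Next, apply the pointwise inequality (\ref{divin2}) to $(E,h)\to(X,\omega)$ and integrate it over the compact manifold $X$:
$$
\int_X s_2(E,h)\wedge\omega^{n-2}\le\lambda_E^2\,\frac{r(r+1)}{2n^2}\int_X\omega^n.
$$
On the left-hand side, $s_2(E,h)=c_1(E,h)\wedge c_1(E,h)-c_2(E,h)$ is a closed representative of the class $c_1(E)^2-c_2(E)$, so the integral equals $\bigl(c_1(E)^2-c_2(E)\bigr)\cdot[\omega]^{n-2}$. On the right-hand side, substituting $\lambda_E^2=\frac{n^2}{([\omega]^n)^2}\,\mu_E^2$ and $\int_X\omega^n=[\omega]^n$ yields exactly $\mu_E^2\,\frac{r(r+1)}{2[\omega]^n}$, and the asserted cohomological inequality follows.

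There is in fact no real obstacle here: all the analytic substance is already contained in Theorem \ref{KL} and in the (deep) Kobayashi--Hitchin correspondence, both of which we are entitled to invoke. The only point that needs a little attention is the bookkeeping of the normalization constants relating $\lambda_E$, $\mu_E$ and $[\omega]^n$, carried out as above; one may also remark, using the equality case of Theorem \ref{KL}, that equality in the corollary forces $E$ to be projectively flat with $c_1(E)$ a real multiple of $[\omega]$.
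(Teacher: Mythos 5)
Your proof is correct and follows exactly the route the paper intends: invoke the Kobayashi--Hitchin correspondence to produce a Hermite--Einstein metric in the class $[\omega]$, integrate the pointwise inequality (\ref{divin2}) over $X$, and convert $\lambda_E$ into $\mu_E$ via $\lambda_E=\frac{n}{\int_X\omega^n}\,\mu_E$. The normalization bookkeeping checks out, so nothing is missing.
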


If $X$ is moreover projective algebraic and $[\omega]=c_1(A)$ is taken to be the first Chern class of an ample line bundle $A\to X$, then the same conclusion can be shown to hold, more generally, if $E$ is only supposed to be semi--stable with respect to $[\omega]$. Indeed, thanks to the classical work of Donaldson \cite{Don85} (see also \cite[Theorem (VI.10.13)]{Kob87}), in this case $E$ admits approximate Hermite--Einstein metrics and we thus get some error term in (\ref{divin2}) which disappears integrating and passing to the limit, exactly as in \cite[Theorem (IV.5.7)]{Kob87}.

\subsubsection*{Acknowledgments} Firstly, we would like to friendly thank F. Campana who encouraged us to write down this note some time ago. We are also indebted with S. Boucksom, J. Cao, P. Dingoyan, P. Gauduchon, E. Mistretta, and R. A. Wentworth for extremely valuable discussions and suggestions. Finally, we thank B. Claudon and A. Höring for pointing out to us respectively the references \cite{Gul12} and \cite{Miy91}. Last but not least, we thank a first anonymous referee who suggested us how to formally derive inequality (\ref{divin2}) from Kobayashi--Lübke and a second anonymous referee for several useful comments and suggestions.

\section{Segre forms as direct images}

Let $(E,h)\to X$ be a rank $r$ holomorphic hermitian vector bundle on a complex manifold $X$ of dimension $n$. Let $D_{E,h}$ be the Chern connection of $(E,h)$, and $\Theta(E,h)=D_{E,h}^2\in C^\infty_{1,1}(X,\operatorname{End}(E))$ its curvature. Let $\pi\colon P(E)\to X$ be the projectivized bundle of lines of $E$, and $\mathcal O_E(-1)$ the tautological line bundle on $P(E)$. The metric $h$ on $E$ naturally induces a metric on $\mathcal O_E(-1)$, being $\mathcal O_E(-1)$ a subbundle of $\pi^*E$. 

Given a point $x_0\in X$, we shall compute its Chern curvature at an arbitrary point $(x_0,[v_0])\in P(E)$ in the fiber over $x_0$. To do so, let us fix local holomorphic coordinates $(z_1,\dots,z_n)$ on $X$ centered at $x_0$ and a local normal frame $(e_1,\dots,e_r)$ for $E$ at $x_0$, such that $e_r(x_0)=v_0/||v_0||_h$. These choices give us local holomorphic coordinates $(z_1,\dots,z_n,\xi_1,\dots,\xi_{r-1})$ on $P(E)$ centered at $(x_0,[v_0])$. A local holomorphic nonvanishing section for $\mathcal O_E(-1)$ is thus given by
$$
(z,\xi)\mapsto\eta(z,\xi)=e_r(z)+\sum_{\lambda=1}^{r-1}\xi_\lambda\,e_\lambda(z),
$$
and the Chern curvature of $\mathcal O_E(-1)$ at the point $(x_0,[v_0])$ is computed by
$$
\bigl(-\partial\bar\partial\log||\eta||_h^2\bigr)|_{(0,0)}.
$$
Now, since $(e_\lambda)$ is a local normal frame, by definition we have
$$
\langle e_\lambda(z),e_\mu(z)\rangle_h=\delta_{\lambda\mu}-\sum_{j,k=1}^nc_{jk\lambda\mu}z_j\bar z_k+O(|z|^3),
$$
where
$$
\begin{aligned}
\Theta_{x_0}(E,h) &=\sum_{\lambda,\mu=1}^{r}\sum_{j,k=1}^nc_{jk\lambda\mu}\,dz_j\wedge d\bar z_k\otimes e_\lambda^*\otimes e_\mu \\
&=\sum_{\lambda,\mu=1}^{r}\Theta_{\mu\lambda}\otimes e_\lambda^*\otimes e_\mu
\end{aligned}
$$
is the Chern curvature of $(E,h)$ at $x_0$. Here, we evidently have set 
$$
\Theta_{\mu\lambda}=\sum_{j,k=1}^nc_{jk\lambda\mu}\,dz_j\wedge d\bar z_k.
$$
Therefore,
$$
||\eta||_h^2=1+|\xi|^2-\sum_{j,k=1}^nc_{jkrr}z_j\bar z_k+O((|z|+|\xi|)^3),
$$
and the Chern curvature we wanted to compute is given by
\begin{equation}\label{curv}
\Theta_{(x_0,[v_0])}(\mathcal O_E(-1),h)=
-\sum_{\lambda=1}^rd\xi_\lambda\wedge d\bar\xi_\lambda+\sum_{j,k=1}^nc_{jkrr}\,dz_j\wedge d\bar z_k.
\end{equation}
Next, we rewrite this formula in more intrinsic terms. For this, we shall exhibit a natural (smooth) decomposition of $T_{P(E)}$ in vertical and horizontal distributions, which depend on the hermitian structure of $E$. First of all observe that, tautologically, the restriction of $\frac i{2\pi}\Theta(\mathcal O_E(-1),h)$ to any fiber $\pi^{-1}(x)\simeq P(E_x)$, $x\in X$, gives minus the Fubini--Study metric $\omega^{FS}_{P(E_{x}),h|_{E_{x}}}$ of $P(E_{x})$, with respect to the metric $h|_{E_{x}}$. In particular, the hermitian form on the holomorphic tangent space $T_{P(E)}$ associated to $\frac i{2\pi}\Theta(\mathcal O_E(-1),h)$ is negative definite on the relative tangent bundle $T_{P(E)/X}:=\ker d\pi\subset T_{P(E)}$. Thus, the orthogonal complement $T_{P(E)/X}^\perp$ to $T_{P(E)/X}$ with respect to $\frac i{2\pi}\Theta(\mathcal O_E(-1),h)$ gives rise to a smooth distribution of complex dimension $n$ such that
$$
T_{P(E)}\simeq_{C^\infty}T_{P(E)/X}\oplus T_{P(E)/X}^\perp,
$$
and $d\pi|_{T_{P(E)/X}^\perp}\colon T_{P(E)/X}^\perp\to T_X$ is a smooth linear isomorphism at every point of $P(E)$.

Using this decomposition of $T_{P(E)}$ into vertical and horizontal distributions, we see that the second term in (\ref{curv}) only acts on horizontal vectors and can be therefore identified (\textsl{via} $d\pi$) with the $(1,1)$-form on $T_{X,x_0}$ given by
$$
\frac 1{||v_0||_h^2}\,\bigl\langle\Theta_{x_0}(E,h)\cdot v_0,v_0\bigr\rangle_h.
$$
Set
$$
\Xi:=\frac{i}{2\pi}\Theta(\mathcal O_E(1),h^{-1}).
$$ 
Summing up, $\Xi$ is the real $(1,1)$-form representing $c_1(\mathcal O_E(1))$ that can be rewritten, using the decomposition above, as
$$
P(E)\ni (x,[v])\mapsto \Xi(x,[v])=\omega^{FS}_{P(E_x),h|_{E_x}}([v])-\vartheta^E_h(x,[v]),
$$
where we have set
$$
\vartheta^E_h(x,[v]):=\frac i{2\pi}\,\frac {\bigl\langle\Theta_{x}(E,h)\cdot v,v\bigr\rangle_h}{||v||_h^2}.
$$
\begin{proposition}
For any integer $0\le k\le n$, the direct image $\pi_*\Xi^{r-1+k}$ equals the $k$-th Segre form $s_k(E,h)$ of $(E,h)$.
\end{proposition}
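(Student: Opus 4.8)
The plan is to exploit the decomposition $\Xi=\omega^{FS}-\vartheta^E_h$ obtained just above, in which---crucially---there are no mixed (vertical--horizontal) terms: by the very definition of $T_{P(E)/X}^\perp$ as the $\Xi$-orthogonal complement of $T_{P(E)/X}$, the form $\omega^{FS}$ is purely vertical of bidegree $(1,1)$ and restricts on each fibre to the Fubini--Study form (so that $\int_{P(E_x)}(\omega^{FS})^{r-1}=1$), while $\vartheta^E_h$ is purely horizontal of bidegree $(1,1)$; moreover $(1,1)$-forms commute under wedge. Expanding,
\[
\Xi^{r-1+k}=\sum_{j=0}^{r-1+k}(-1)^{r-1+k-j}\binom{r-1+k}{j}\,(\omega^{FS})^{j}\wedge(\vartheta^E_h)^{\,r-1+k-j}.
\]
Since the fibres of $\pi$ have complex dimension $r-1$, the operator $\pi_*$ annihilates every summand whose vertical bidegree is not exactly $(r-1,r-1)$; as $(\omega^{FS})^{j}$ has vertical bidegree $(j,j)$ and $(\vartheta^E_h)^{m}$ is purely horizontal (and vanishes for $m>n$, which is harmless since $k\le n$), only the term $j=r-1$ survives, whence
\[
\pi_*\Xi^{\,r-1+k}=(-1)^{k}\binom{r-1+k}{r-1}\,\pi_*\!\Bigl((\omega^{FS})^{r-1}\wedge(\vartheta^E_h)^{k}\Bigr).
\]

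I would then evaluate this remaining direct image pointwise on $X$. Fixing $x\in X$, identifying the horizontal distribution with $T_{X,x}$ via $d\pi$ and using $\int_{P(E_x)}(\omega^{FS})^{r-1}=1$, one is reduced to the fibre integral
\[
\pi_*\!\Bigl((\omega^{FS})^{r-1}\wedge(\vartheta^E_h)^{k}\Bigr)(x)=\int_{P(E_x)}\left(\frac{\langle A\,v,v\rangle_h}{\|v\|_h^2}\right)^{\!k}(\omega^{FS})^{r-1},\qquad A:=\tfrac{i}{2\pi}\Theta_x(E,h).
\]
Writing $A=(A_{\alpha\beta})$ in a unitary frame, expanding the $k$-th power, and invoking the standard moment integrals over $P(E_x)\cong\mathbb P^{r-1}$,
\[
\int_{\mathbb P^{r-1}}\frac{\bar v_{\alpha_1}\cdots\bar v_{\alpha_k}\,v_{\beta_1}\cdots v_{\beta_k}}{\|v\|^{2k}}\,(\omega^{FS})^{r-1}=\frac{(r-1)!}{(r-1+k)!}\,\#\{\sigma\in S_k:\beta_i=\alpha_{\sigma(i)}\ \forall i\},
\]
one finds, after resumming over $S_k$ and recognizing each permutation's contribution as a product over its cycles,
\[
\int_{P(E_x)}\left(\frac{\langle A\,v,v\rangle_h}{\|v\|_h^2}\right)^{\!k}(\omega^{FS})^{r-1}=\frac{(r-1)!}{(r-1+k)!}\sum_{\sigma\in S_k}\ \prod_{\text{cycles }c\text{ of }\sigma}\tr\!\bigl(A^{|c|}\bigr).
\]
By the exponential formula (equivalently, Newton's identities) $\frac1{k!}\sum_{\sigma\in S_k}\prod_c\tr(A^{|c|})=h_k$, the $k$-th complete homogeneous symmetric function of the Chern roots of $(E,h)$, the fibre integral equals $\dfrac{(r-1)!\,k!}{(r-1+k)!}\,h_k=\dbinom{r-1+k}{r-1}^{-1}h_k$.

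Combining the two boxed-off identities and recalling that $s_\bullet(E,h)=c_\bullet(E,h)^{-1}$ reads, on Chern roots $a_1,\dots,a_r$, as $\sum_k s_k=\prod_i(1+a_i)^{-1}=\sum_k(-1)^k h_k$, i.e. $s_k(E,h)=(-1)^k h_k$, I obtain
\[
\pi_*\Xi^{\,r-1+k}=(-1)^k\binom{r-1+k}{r-1}\binom{r-1+k}{r-1}^{-1}h_k=(-1)^k h_k=s_k(E,h),
\]
the case $k=0$ being precisely the normalization $\int_{P(E_x)}(\omega^{FS})^{r-1}=1$. I expect the main obstacle to be exactly this fibrewise computation: one has to run the projective-space moment integrals, bookkeep all the combinatorial factors, and correctly match the resulting symmetric function of the curvature eigenvalues with the Segre form---pinning down both the sign $(-1)^k$ and the precise cancellation of the binomial coefficient. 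By contrast, the conceptual input, namely that the splitting $\Xi=\omega^{FS}-\vartheta^E_h$ carries no mixed terms so that the binomial expansion collapses to a single surviving summand under $\pi_*$, has already been prepared in the discussion preceding the statement and should be essentially immediate.
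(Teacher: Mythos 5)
Your proof is correct and follows essentially the same route as the paper: the decomposition $\Xi=\omega^{FS}-\vartheta^E_h$, the collapse of the binomial expansion under $\pi_*$ to the single surviving term of vertical bidegree $(r-1,r-1)$, and the reduction to a fibrewise moment integral over $P(E_x)$ identified with the $k$-th complete homogeneous symmetric function of the curvature. The only difference is bookkeeping: the paper diagonalizes the Hermitian operator, computes the diagonal sphere moments via Gaussian integrals, and passes to elementary symmetric polynomials by Newton's relations, whereas you invoke the Wick-type permutation formula and the cycle-index expansion of $h_k$ in power sums; the two computations are equivalent.
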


As mentioned in the introduction, by the pointwise nature of this statement, no hypotheses of algebricity (or kählerness) nor of compactness of $X$ are indeed needed.

\begin{proof}
This is a pointwise computation. We have to show that, for an arbitrary $x_0\in X$, the $(k,k)$-form given by the integration of $\Xi^{r-1+k}$ over the fiber $P(E_{x_0})$ coincides with $s_k(E,h)$ at $x_0$, in some suitable local coordinates.

For this, let us keep notations as above and compute
\begin{equation}\label{power}
\Xi^{r-1+k}(x_0,[v]) =\sum_{i=0}^{r-1+k}\binom{r-1+k}{i}(-1)^i\,\bigl(\vartheta^E_h(x_0,[v])\bigr)^i\wedge\bigl(\omega^{FS}_{P(E_{x_0}),h|_{E_{x_0}}}([v])\bigr)^{r-1+k-i}.
\end{equation}
For obvious degree reasons, among all terms in (\ref{power}), the only one which survives once an integration over the fibers is performed is of course the one corresponding to $i=k$. We are therefore led to consider the integral
\begin{multline*}
\int_{P(E_{x_0})}\Xi^{r-1+k} \\ =
(-1)^k\binom{r-1+k}{k}\int_{P(E_{x_0})}\bigl(\vartheta^E_h(x_0,[v])\bigr)^k\wedge\bigl(\omega^{FS}_{P(E_{x_0}),h|_{E_{x_0}}}([v])\bigr)^{r-1} \\
=(-1)^k(r-1)!\binom{r-1+k}{k} \times \\ \times\int_{P(E_{x_0})}\left(\frac i{2\pi}\,\frac {\bigl\langle\Theta_{x_0}(E,h)\cdot v,v\bigr\rangle_h}{||v||_h^2}\right)^kd\operatorname{Vol}^{FS}_{P(E_{x_0}),h|_{E_{x_0}}}([v]) \\
=(-1)^k(r-1)!\binom{r-1+k}{k} \times \\ \times \int_{P(E_{x_0})}\left(\frac i{2\pi}\,\frac {\sum_{\lambda,\mu=1}^r\Theta_{\mu\lambda}v_\lambda\bar v_\mu}{||v||_h^2}\right)^kd\operatorname{Vol}^{FS}_{P(E_{x_0}),h|_{E_{x_0}}}([v]).
\end{multline*}
Here, $d\operatorname{Vol}^{FS}_{P(E_{x_0}),h|_{E_{x_0}}}$ is the Fubini--Study volume element on $P(E_{x_0})$ and the $v_\lambda$'s, $\lambda=1,\dots,r$, are the coordinates of $v\in E_{x_0}$ with respect to the basis $(e_1(x_0),\dots,e_r(x_0))$. The $(1,1)$-forms $\Theta_{\mu\lambda}$ can be of course considered as constants, since the point $x_0$ is kept fixed. 

Now, given a finite dimensional hermitian vector space $V$, its projectivization $p\colon V\setminus\{0\}\to P(V)$, call $U(V)$ the unit sphere of $V$, and $d\operatorname{Vol}^{FS}$ and $d\sigma$ respectively the Fubini--Study volume element on $P(V)$ and the Lebesgue measure on $U(V)$ induced by the hermitian structure of $V$. The volume of $U(V)$ with respect to $d\sigma$ is thus given by $2\pi^{\dim V}/(\dim V-1)!$. Then, given an integrable function $f\colon P(V)\to\mathbb C$, it is well-known (see for instance \cite[Lemma 1, Section 13.4, Chapter 3]{Chi89}) that the following equality holds:
$$
\int_{P(V)} f\,d\operatorname{Vol}^{FS} = \frac {1}{2\pi^{\dim V}}\int_{U(V)}(f\circ p)\,d\sigma=\frac 1{(\dim V-1)!}\,\dashint_{U(V)}(f\circ p)\,d\sigma,
$$
where $\dashint$ stands for the average integral. Therefore, what remains to show is that
$$
(-1)^k{\binom{r-1+k}{k}}\dashint_{U(E_{x_0})}\biggl(\frac i{2\pi}\sum_{\lambda,\mu=1}^r\Theta_{\mu\lambda}v_\lambda\bar v_\mu\biggr)^k d\sigma(v)=s_k(E,h)(x_0).
$$
This will be achieved in the subsection below and the proposition is proved.
\end{proof}

\subsection{An elementary lemma}

Let $V$ be a complex vector space of complex dimension $r$ and $\langle\cdot,\cdot\rangle$ a hermitian inner product on $V$. Next, given a positive integer $k$, consider on the space $\mathcal H$ of self-adjoint linear operators on $V$ the following homogeneous function of degree $k$:
$$
\begin{aligned}
\phi_k\colon & \mathcal H\to\mathbb R \\
& T\mapsto\dashint_{S^{2r-1}}\langle T(v),v\rangle^k\,d\sigma(v):=\frac{(r-1)!}{2\pi^r}\int_{S^{2r-1}}\langle T(v),v\rangle^k\,d\sigma(v).
\end{aligned}
$$
Here, $S^{2r-1}$ is the unitary sphere with respect to the fixed hermitian inner product, $d\sigma$ is the Lebesgue measure, and $2\pi^r/(r-1)!$ the corresponding volume of the sphere.

Let us compute what this function gives, in terms of the eigenvalues of $T\in\mathcal H$. So, fix a unitary basis $\{e_1,\dots,e_r\}$ of $V$ which diagonalizes $T$. Suppose the matrix $\Theta$ of $T$ relative to this basis be given by $\Theta=\operatorname{diag}(\lambda_1,\dots,\lambda_r)$.
With this choices, we have
$$
\phi_k(T)=\sum_{j_1,\dots,j_k=1}^r\lambda_{j_1}\cdots\lambda_{j_k}\dashint_{S^{2r-1}}|z_{j_1}|^2\cdots|z_{j_k}|^2\,d\sigma(z),
$$
where the $z_j$'s are coordinates with respect to the $\{e_j\}$ basis. For $j_1,\dots,j_k=1,\dots,r$, let us call 
$$
I(j_1,\dots,j_k):=\dashint_{S^{2r-1}}|z_{j_1}|^2\cdots|z_{j_k}|^2\,d\sigma(z).
$$
\begin{lemma}
We have
$$
I(j_1,\dots,j_k)=\frac{m_1!\cdots m_r!(r-1)!}{(r-1+k)!},
$$
where $m_\ell$, $\ell=1,\dots r$, is the number of times that $\ell$ appears among $j_1,\dots,j_k$.
\end{lemma}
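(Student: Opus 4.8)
The plan is to reduce the spherical average to a single Gaussian integral over $\mathbb{C}^r$, which factorizes completely. First I would observe that, since the integrand $|z_{j_1}|^2\cdots|z_{j_k}|^2$ depends only on how many times each index $\ell$ occurs, we may write $a_\ell:=m_\ell$ (so $a_1+\cdots+a_r=k$) and it suffices to evaluate
$$
J(a_1,\dots,a_r):=\int_{S^{2r-1}}|z_1|^{2a_1}\cdots|z_r|^{2a_r}\,d\sigma(z),
$$
from which $I(j_1,\dots,j_k)=\tfrac{(r-1)!}{2\pi^r}\,J(a_1,\dots,a_r)$.

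The key identity is that, denoting by $dV$ the Lebesgue measure,
$$
\int_{\mathbb{C}^r}|z_1|^{2a_1}\cdots|z_r|^{2a_r}\,e^{-|z|^2}\,dV(z)=\prod_{\ell=1}^r\int_{\mathbb{C}}|w|^{2a_\ell}\,e^{-|w|^2}\,dV(w),
$$
and each one-dimensional factor, computed in polar coordinates with the substitution $t=|w|^2$, equals $\pi\,a_\ell!$; hence the left-hand side is $\pi^r\prod_\ell a_\ell!$. On the other hand, passing to polar coordinates $z=\rho\zeta$ with $\rho=|z|\ge 0$ and $\zeta\in S^{2r-1}$, so that $dV(z)=\rho^{2r-1}\,d\rho\,d\sigma(\zeta)$, the same integral separates as
$$
\left(\int_0^{+\infty}\rho^{2k+2r-1}\,e^{-\rho^2}\,d\rho\right)\cdot J(a_1,\dots,a_r),
$$
and the radial factor is $\tfrac12(k+r-1)!$ after the substitution $t=\rho^2$. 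Comparing the two evaluations gives $J(a_1,\dots,a_r)=\dfrac{2\pi^r\,\prod_\ell a_\ell!}{(r-1+k)!}$, and dividing by the total volume $2\pi^r/(r-1)!$ of $S^{2r-1}$ yields exactly $I(j_1,\dots,j_k)=\dfrac{m_1!\cdots m_r!\,(r-1)!}{(r-1+k)!}$.

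The argument is entirely elementary; the only place demanding a little care is the bookkeeping of the normalizing constants — the power of $\pi$ coming from the sphere volume, and the exponents in the radial integral — where it is easy to slip by a factorial. A self-contained alternative avoiding the Gaussian trick would be an induction on $k$: integrating out one coordinate at a time and invoking the unitary invariance of $d\sigma$ to express $I(j_1,\dots,j_k)$ through lower-degree averages; but the Gaussian computation above is the shortest and most transparent route, and it also makes the appearance of $(r-1+k)!$ — matching the binomial factor $\binom{r-1+k}{k}$ in the proof of the Proposition — completely manifest.
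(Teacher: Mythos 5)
Your proof is correct and follows essentially the same route as the paper: both evaluate the Gaussian integral $\int_{\mathbb C^r}|z_{j_1}|^2\cdots|z_{j_k}|^2e^{-|z|^2}\,dV$ twice, once by polar coordinates (extracting the spherical average times $\tfrac12\Gamma(r+k)$) and once by separation of variables (giving $\pi^r\,m_1!\cdots m_r!$), then compare. The bookkeeping of the constants is accurate.
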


\begin{proof}
Consider the integral
$$
\int_{\mathbb C^r}e^{-|z|^2}|z_{j_1}|^2\cdots|z_{j_k}|^2\,dz.
$$
From the one hand, passing to polar coordinates, we have
$$
\begin{aligned}
\int_{\mathbb C^r}e^{-|z|^2}|z_{j_1}|^2\cdots|z_{j_k}|^2\,dz &= \int_{\mathbb C^r}e^{-|z|^2}|z|^{2k}\frac{|z_{j_1}|^2\cdots|z_{j_k}|^2}{|z|^{2k}}\,dz \\
&= \frac{2\pi^r}{(r-1)!}\,I(j_1,\dots,j_k)\int_0^{+\infty}e^{-\rho^2}\rho^{2k+2r-1}\,d\rho \\
&= \frac{2\pi^r}{(r-1)!}\,I(j_1,\dots,j_k)\,\frac{\Gamma(r+k)}2 \\
&= \frac{\pi^r(r-1+k)!}{(r-1)!}\,I(j_1,\dots,j_k),
\end{aligned}
$$
where $\Gamma$ is the Euler's gamma function.
On the other hand, by separating the variables and then passing to polar coordinates, we obtain
$$
\begin{aligned}
\int_{\mathbb C^r}e^{-|z|^2}|z_{j_1}|^2\cdots|z_{j_k}|^2\,dz &=
\int_{\mathbb C^r}e^{-|z|^2}|z_{1}|^{2m_1}\cdots|z_{r}|^{2m_r}\,dz \\
&=\prod_{j=1}^r\int_{\mathbb C}e^{-|z_j|^2}|z_j|^{2m_j}\,dz_j \\
&= \prod_{j=1}^r 2\pi\int_0^{+\infty}e^{-\rho^2}\rho^{2m_j+1}\,d\rho \\
&= (2\pi)^r \prod_{j=1}^r \frac{\Gamma(m_j+1)}2 = \pi^r\,m_1!\cdots m_r!.
\end{aligned}
$$
Putting these together, we obtain the desired result.
\end{proof}

Now, this lemma tells us that, for $T\in\mathcal H$, we have
$$
\begin{aligned}
\phi_k(T) &=\sum_{j_1,\dots,j_k=1}^r\frac{m_1!\cdots m_r!(r-1)!}{(r-1+k)!}\,\lambda_{j_1}\cdots\lambda_{j_k} \\
&= \frac{k!(r-1)!}{(r-1+k)!}\sum_{j_1,\dots,j_k=1}^r\frac{m_1!\cdots m_r!}{k!}\,\lambda_{j_1}\cdots\lambda_{j_k} \\
&= \frac 1{\binom{r-1+k}{k}}\,\sigma_k(\lambda_1,\dots,\lambda_r),
\end{aligned}
$$
where $\sigma_k$ is the complete homogeneous symmetric polynomial of degree $k$ in $r$ variables. Thus, it has a unique expression in terms of the elementary symmetric polynomials $\gamma_j$'s in the eigenvalues of $T$, which are nothing but the traces of the exterior powers of $T$. This can be explicitly obtained by the well-known relation
$$
\sum_{j=0}^m(-1)^j \sigma_j(\lambda_1,\dots,\lambda_r)\cdot \gamma_{m-j}(\lambda_1,\dots,\lambda_r)=0,
$$
which is valid for all integer $m>0$. Here are the first few as an example:
$$
\begin{aligned}
&\phi_1(T)=-\frac 1r\,\tr(T), \\
&\phi_2(T)=\frac 2{r(r+1)}\,\bigl((\tr(T)^2-\tr(\Lambda^2T)\bigr),\\
&\phi_3(T)=-\frac 6{r(r+1)(r+2)}\,\bigl((\tr(T)^3-2\tr(T)\tr(\Lambda^2T)+\tr(\Lambda^3T)\bigr),\\
&\dots
\end{aligned}
$$
Thus, if we formally compute $\phi_k\bigl(\frac i{2\pi}\Theta_{x_0}(E,h)\bigr)$, where $\Theta$ is the Chern curvature of a holomorphic hermitian vector bundle $(E,h)\to X$ at a given point $x_0\in X$, we exactly obtain
$$
\phi_k\biggl(\frac i{2\pi}\Theta_{x_0}(E,h)\biggr)=\frac{(-1)^k}{\binom{r-1+k}{k}}\,s_k(E,h)(x_0).
$$

\section{A Kobayashi--L\"ubke type inequality}\label{ineqproof}

Let $(E,h)\to (X,\omega)$ be a holomorphic hermitian rank $r$ vector bundle on a Kähler $n$-dimensional compact manifold, with Kähler metric $\omega$ and suppose that $(E,h)$ is Hermite--Einstein with respect to $\omega$. Then, there exists a real number $\lambda_E$, such that
\begin{equation}\label{HE}
\frac{i}{2\pi}\,\Theta(E,h)\wedge\frac{\omega^{n-1}}{(n-1)!}=\lambda_E\,\frac{\omega^n}{n!}\otimes\Id_E.
\end{equation}
Now, we shall explain how to describe the Hermite--Einstein property of $\Theta(E,h)$ in terms of the curvature of $(\mathcal O_E(1),h^{-1})$. Keeping notations as in the preceding section, we have the following proposition.

\begin{proposition}\label{hepe}
The hermitian vector bundle $(E,h)\to (X,\omega)$ is Hermite--Einstein if and only if the following identity holds:
\begin{equation}\label{HEPE}
\frac{\Xi^{r}}{r!}\wedge\frac{\pi^*\omega^{n-1}}{(n-1)!} = -\lambda_E\,\frac{\Xi^{r-1}}{(r-1)!}\wedge\frac{\pi^*\omega^{n}}{n!}.
\end{equation}
\end{proposition}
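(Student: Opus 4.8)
The plan is to substitute the pointwise description $\Xi(x,[v])=\omega^{FS}_{P(E_x),h|_{E_x}}([v])-\vartheta^E_h(x,[v])$ obtained in the previous section into both sides of (\ref{HEPE}) and to run a bidegree count adapted to the $C^\infty$ splitting $T_{P(E)}\simeq T_{P(E)/X}\oplus T_{P(E)/X}^\perp$. Here $\omega^{FS}_{P(E_x),h|_{E_x}}$ is \emph{vertical} (it annihilates $T_{P(E)/X}^\perp$ and restricts to the Fubini--Study form on each fibre), while $\vartheta^E_h$ and $\pi^*\omega$ are \emph{horizontal} (they annihilate $T_{P(E)/X}$); moreover $\vartheta^E_h(x,[v])$ corresponds, via $d\pi$, to the $(1,1)$-form $\frac i{2\pi}\langle\Theta_x(E,h)\cdot v,v\rangle_h/||v||_h^2$ on $T_{X,x}$. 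Since the vertical distribution has complex rank $r-1$ and the horizontal one has rank $n$, writing $\Xi=\omega^{FS}-\vartheta^E_h$ and expanding $\Xi^r=\sum_i\binom{r}{i}(-1)^i(\omega^{FS})^{r-i}\wedge(\vartheta^E_h)^i$, the summand of index $i$ wedged with $\pi^*\omega^{n-1}$ carries vertical bidegree $(r-i,r-i)$ and horizontal bidegree $(i+n-1,i+n-1)$, hence vanishes unless $i=1$; likewise in $\Xi^{r-1}\wedge\pi^*\omega^n$ only $i=0$ survives. This gives
\[
\Xi^r\wedge\pi^*\omega^{n-1}=-r\,\vartheta^E_h\wedge(\omega^{FS})^{r-1}\wedge\pi^*\omega^{n-1},\qquad \Xi^{r-1}\wedge\pi^*\omega^n=(\omega^{FS})^{r-1}\wedge\pi^*\omega^n,
\]
so that, after an elementary simplification of the numerical factors, (\ref{HEPE}) is equivalent to
\[
(\omega^{FS})^{r-1}\wedge\left(\vartheta^E_h\wedge\frac{\pi^*\omega^{n-1}}{(n-1)!}-\lambda_E\,\frac{\pi^*\omega^n}{n!}\right)=0\quad\text{on }P(E).
\]

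Next I would cancel the factor $(\omega^{FS})^{r-1}$: it is a nowhere-vanishing smooth section of the line bundle $\Lambda^{r-1,r-1}T_{P(E)/X}^*$, so under the bundle splitting a top form of the shape (horizontal $(n,n)$-form)$\,\wedge\,(\omega^{FS})^{r-1}$ vanishes at a point precisely when its horizontal factor does. Thus (\ref{HEPE}) holds if and only if $\vartheta^E_h\wedge\frac{\pi^*\omega^{n-1}}{(n-1)!}=\lambda_E\,\frac{\pi^*\omega^n}{n!}$ everywhere on $P(E)$; identifying horizontal forms with forms on $T_{X,x}$ through $d\pi$ and recalling the definition of $\vartheta^E_h$, this reads
\[
\frac i{2\pi}\,\frac{\langle\Theta_x(E,h)\cdot v,v\rangle_h}{||v||_h^2}\wedge\frac{\omega^{n-1}}{(n-1)!}=\lambda_E\,\frac{\omega^n}{n!}\qquad\text{for all }x\in X,\ v\in E_x\setminus\{0\}.
\]

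Finally I would match this with the Hermite--Einstein condition (\ref{HE}). Writing $\frac i{2\pi}\Theta(E,h)\wedge\frac{\omega^{n-1}}{(n-1)!}=G\cdot\frac{\omega^n}{n!}$ for the unique smooth $\operatorname{End}(E)$-valued function $G$ (the mean curvature), one obtains the displayed identity from (\ref{HE}) at once by pairing against $v$ and dividing by $||v||_h^2$, since $\langle\,\cdot\,v,v\rangle_h$ commutes with wedging by the scalar form $\omega^{n-1}$; conversely the displayed identity says $\langle G(x)v,v\rangle_h=\lambda_E\,||v||_h^2$ for every $v$, which by the complex polarization identity forces $G\equiv\lambda_E\operatorname{Id}_E$, i.e. (\ref{HE}). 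I expect the only slightly delicate point to be the legitimacy of cancelling $(\omega^{FS})^{r-1}$, which rests on the fibrewise non-degeneracy of the Fubini--Study form and on the fact that the vertical/horizontal decomposition is a genuine $C^\infty$ bundle splitting of $T_{P(E)}$; everything else is bookkeeping of bidegrees together with a one-line linear-algebra fact.
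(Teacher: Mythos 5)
Your proposal is correct and follows essentially the same route as the paper's own proof: expand $\Xi=\omega^{FS}-\vartheta^E_h$, observe by vertical/horizontal degree counting that only the $\ell=1$ (resp.\ $\ell=0$) term survives in $\Xi^{r}\wedge\pi^*\omega^{n-1}$ (resp.\ $\Xi^{r-1}\wedge\pi^*\omega^{n}$), identify $\vartheta^E_h\wedge\frac{\pi^*\omega^{n-1}}{(n-1)!}$ with $\frac{\langle T_{E,h,\omega}(x)\cdot v,v\rangle_h}{||v||_h^2}\,\frac{\pi^*\omega^n}{n!}$ via the mean curvature, and conclude by polarization that $\langle T_{E,h,\omega}(x)v,v\rangle_h=\lambda_E||v||_h^2$ for all $v$ iff $T_{E,h,\omega}\equiv\lambda_E\operatorname{Id}_E$. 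The only cosmetic difference is that you explicitly cancel the nowhere-vanishing factor $(\omega^{FS})^{r-1}$, whereas the paper keeps it on both sides; this is the same step in substance.
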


\begin{proof}
For the reader convenience, let us denote by $\omega^{FS}_x$ the Fubini--Study metric $\omega^{FS}_{P(E_{x}),h|_{E_{x}}}$ over $P(E_x)$ with respect to $h|_{E_x}$. Fix a point $(x,[v])\in P(E)$ and consider the following quantity, computed at $(x,[v])$:
\begin{multline*}
\frac{\Xi^{r}}{r!}\wedge\frac{\pi^*\omega^{n-1}}{(n-1)!} \\=
\frac 1{r!(n-1)!}\sum_{\ell=0}^r\binom{r}{\ell}(-1)^\ell\,\bigl(\omega^{FS}_x([v])\bigr)^{r-\ell}\wedge\bigl(\vartheta^E_h(x,[v])\bigr)^\ell\wedge\pi^*\omega^{n-1}.
\end{multline*}
For degree reasons, only one term survives, namely
$$
\frac{\Xi^{r}}{r!}\wedge\frac{\pi^*\omega^{n-1}}{(n-1)!} =
-\frac{\bigl(\omega^{FS}_x([v])\bigr)^{r-1}}{(r-1)!}\wedge\vartheta^E_h(x,[v])\wedge\frac{\pi^*\omega^{n-1}}{(n-1)!}.
$$
Now, on the one hand
$$
\begin{aligned}
\vartheta^E_h(x,[v])\wedge\frac{\pi^*\omega^{n-1}}{(n-1)!} &= 
\frac i{2\pi}\,\frac {\bigl\langle\Theta_{x}(E,h)\cdot v,v\bigr\rangle_h}{||v||_h^2}\wedge\frac{\pi^*\omega^{n-1}}{(n-1)!} \\
&=
\frac i{2\pi}\,\frac {\bigl\langle\Theta_{x}(E,h)\wedge\frac{\omega^{n-1}}{(n-1)!}\cdot v,v\bigr\rangle_h}{||v||_h^2} \\
&= \frac {\bigl\langle T_{E,h,\omega}(x)\cdot v,v\bigr\rangle_h}{||v||_h^2}\,\frac{\pi^*\omega^n}{n!},
\end{aligned}
$$
where $T_{E,h,\omega}$ is a hermitian endomorphism of $(E,h)$ often called the mean curvature of $(E,h)$ with respect to $\omega$ (observe that by its very definition, $T_{E,h,\omega}\equiv \lambda_E\operatorname{Id}_E$ if and only if $(E,h)$ is Hermite--Einstein with respect to $\omega$). On the other hand, as it is straightforwardly seen again by degree reasons,
$$
\frac{\Xi^{r-1}}{(r-1)!}\wedge\frac{\pi^*\omega^{n}}{n!}=\frac{\bigl(\omega^{FS}_x([v])\bigr)^{r-1}}{(r-1)!}\wedge\frac{\pi^*\omega^{n}}{n!}.
$$
Finally, since $\bigl\langle T_{E,h,\omega}(x)\cdot v,v\bigr\rangle_h$ equals $\lambda_E\,||v||^2_h$ for all $v\in T_{X,x}$ if and only if $T_{E,h,\omega}(x)=\lambda_E\operatorname{Id}_E$, putting all this together, we obtain (\ref{HEPE}).
\end{proof}

\begin{remark}
If $(E,h)$ is not necessarily Hermite--Einstein, then identity (\ref{HEPE}) reads
$$
\frac{\Xi^{r}}{r!}\wedge\frac{\pi^*\omega^{n-1}}{(n-1)!} = -\gamma_1(\vartheta^E_h(x,[v])/\omega)\,\frac{\Xi^{r-1}}{(r-1)!}\wedge\frac{\pi^*\omega^{n}}{n!},
$$
where we define $\gamma_k(\vartheta^E_h(x,[v])/\omega)$, $k=1,\dots,n$, to be the $k$-th symmetric polynomial in the eigenvalues of the real $(1,1)$-form $\vartheta^E_h(x,[v])$ with respect to $\omega$. 
\end{remark}

More generally, the same kind of computations leads, for each $k=1,\dots,n$, to the following identity of top degree forms on $P(E)$.

\begin{proposition}\label{sympoly}
Let $(E,h)\to (X,\omega)$ be a holomorphic hermitian vector bundle of rank $r$ over a $n$-dimensional hermitian manifold. Then, on $P(E)$, we have
\begin{equation}\label{symev}
\frac{\Xi^{r-1+k}}{(r-1+k)!}\wedge\frac{\pi^*\omega^{n-k}}{(n-k)!} = (-1)^k\gamma_k(\vartheta^E_h(x,[v])/\omega)\,\frac{\Xi^{r-1}}{(r-1)!}\wedge\frac{\pi^*\omega^{n}}{n!}.
\end{equation}
\end{proposition}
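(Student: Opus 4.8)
The plan is to repeat almost verbatim the computation in the proof of Proposition~\ref{hepe}, simply carrying one more power of $\Xi$ and one fewer power of $\pi^*\omega$. Fix a point $(x,[v])\in P(E)$ and use the smooth $\Xi$-orthogonal splitting $T_{P(E)}\simeq_{C^\infty}T_{P(E)/X}\oplus T_{P(E)/X}^\perp$ from the previous section, with respect to which $\Xi(x,[v])=\omega^{FS}_x([v])-\vartheta^E_h(x,[v])$, the form $\omega^{FS}_x([v])$ being purely vertical (a $(1,1)$-form involving the $r-1$ fibre directions only) and $\vartheta^E_h(x,[v])$, like $\pi^*\omega$, being purely horizontal (identified via $d\pi$ with a $(1,1)$-form on $T_{X,x}$).

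First I would expand
$$
\Xi^{r-1+k}(x,[v])=\sum_{i=0}^{r-1+k}\binom{r-1+k}{i}(-1)^i\,\bigl(\vartheta^E_h(x,[v])\bigr)^i\wedge\bigl(\omega^{FS}_x([v])\bigr)^{r-1+k-i}
$$
and wedge it with $\pi^*\omega^{n-k}$. The crucial point is a degree count against the splitting: in the $i$-th summand the factor $\bigl(\omega^{FS}_x([v])\bigr)^{r-1+k-i}$ is purely vertical, hence vanishes unless $r-1+k-i\le r-1$, i.e.\ $i\ge k$, whereas the factor $\bigl(\vartheta^E_h(x,[v])\bigr)^i\wedge\pi^*\omega^{n-k}$ is purely horizontal, hence vanishes unless $i+(n-k)\le n$, i.e.\ $i\le k$. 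Thus only the term $i=k$ contributes, and simplifying $\binom{r-1+k}{k}=(r-1+k)!/(k!\,(r-1)!)$ gives
$$
\frac{\Xi^{r-1+k}}{(r-1+k)!}\wedge\frac{\pi^*\omega^{n-k}}{(n-k)!}=(-1)^k\,\frac{\bigl(\omega^{FS}_x([v])\bigr)^{r-1}}{(r-1)!}\wedge\frac{\bigl(\vartheta^E_h(x,[v])\bigr)^{k}}{k!}\wedge\frac{\pi^*\omega^{n-k}}{(n-k)!}.
$$

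To conclude I would invoke two elementary facts. First, since $\vartheta^E_h(x,[v])$ and $\pi^*\omega$ are both horizontal $(1,1)$-forms on the $n$-dimensional horizontal space, simultaneous diagonalisation yields $\dfrac{\bigl(\vartheta^E_h(x,[v])\bigr)^{k}}{k!}\wedge\dfrac{\pi^*\omega^{n-k}}{(n-k)!}=\gamma_k(\vartheta^E_h(x,[v])/\omega)\,\dfrac{\pi^*\omega^n}{n!}$, by the very definition of $\gamma_k$. Second, exactly the same degree count as above (already used in the proof of Proposition~\ref{hepe}) shows $\dfrac{\bigl(\omega^{FS}_x([v])\bigr)^{r-1}}{(r-1)!}\wedge\dfrac{\pi^*\omega^n}{n!}=\dfrac{\Xi^{r-1}}{(r-1)!}\wedge\dfrac{\pi^*\omega^n}{n!}$. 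Plugging these in gives precisely (\ref{symev}). I do not expect any genuine obstacle: the entire content is the vertical/horizontal degree bookkeeping, which is legitimate precisely because the distribution $T_{P(E)/X}^\perp$ is defined to be $\Xi$-orthogonal to the fibres, so that $\Xi$ has no mixed vertical–horizontal component; the rest is only a matter of keeping the combinatorial constants straight.
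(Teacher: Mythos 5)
Your proof is correct and follows essentially the same route as the paper's: the paper simply says the argument is identical to that of Proposition~\ref{hepe} plus the standard identity $\frac{\alpha^{k}}{k!}\wedge\frac{\omega^{n-k}}{(n-k)!}=\gamma_k(\alpha/\omega)\,\frac{\omega^n}{n!}$, which is exactly the vertical/horizontal degree count and the $\gamma_k$ identity you spell out. Your bookkeeping of the binomial coefficient and the identification $\frac{(\omega^{FS}_x)^{r-1}}{(r-1)!}\wedge\frac{\pi^*\omega^n}{n!}=\frac{\Xi^{r-1}}{(r-1)!}\wedge\frac{\pi^*\omega^n}{n!}$ are both accurate.
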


\begin{proof}
The proof goes exactly as for Proposition \ref{hepe}, with only one supplementary standard remark: for $\alpha$ a real $(1,1)$-form, one has
$$
\frac{\alpha^{k}}{k!}\wedge\frac{\omega^{n-k}}{(n-k)!}=\gamma_k(\alpha/\omega)\,\frac{\omega^n}{n!}.
$$
\end{proof}

We are now in a good shape to prove Theorem \ref{KL}.

\begin{proof}[Proof of Theorem \ref{KL}]
We begin with the following elementary lemma.

\begin{lemma}\label{multlag}
Let $\gamma_1,\gamma_2\colon\mathbb R^n\to\mathbb R$ be respectively the first and second elementary symmetric polynomial function in $n$ variables. Then, $\gamma_2$ has an absolute maximum at the point $(C/n,\dots,C/n)$, if it is subject to the constraint $\gamma_1\equiv C$. 
\end{lemma}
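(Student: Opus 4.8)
The plan is to reduce this constrained maximization to an elementary statement about sums of squares. The key observation is the identity
$$
\gamma_1(x)^2 = \sum_{i=1}^n x_i^2 + 2\,\gamma_2(x),
$$
valid for every $x=(x_1,\dots,x_n)\in\mathbb R^n$, which shows that on the affine hyperplane $\{\gamma_1\equiv C\}$ one has $\gamma_2(x)=\tfrac12\bigl(C^2-\sum_{i=1}^n x_i^2\bigr)$. Thus maximizing $\gamma_2$ subject to $\gamma_1\equiv C$ is \emph{equivalent} to minimizing the quadratic form $x\mapsto\sum_{i=1}^n x_i^2$ on the same hyperplane.

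For the latter I would invoke the Cauchy--Schwarz inequality applied to the vectors $x$ and $(1,\dots,1)$ (equivalently, the convexity of $t\mapsto t^2$, i.e.\ the quadratic-mean/arithmetic-mean inequality): on $\{\gamma_1\equiv C\}$ one has
$$
\sum_{i=1}^n x_i^2 \ge \frac1n\Bigl(\sum_{i=1}^n x_i\Bigr)^2 = \frac{C^2}{n},
$$
with equality if and only if $x_1=\cdots=x_n$, hence if and only if $x=(C/n,\dots,C/n)$. Substituting back yields $\gamma_2(x)\le\frac{n-1}{2n}\,C^2$ on the constraint set, with equality attained exactly at the point $(C/n,\dots,C/n)$; this is precisely the assertion of the lemma.

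Alternatively, one could run the Lagrange-multiplier computation directly: since $\partial\gamma_2/\partial x_i=\gamma_1-x_i$, a critical point of $\gamma_2$ on $\{\gamma_1\equiv C\}$ satisfies $\gamma_1-x_i=\mu$ for all $i$ and some $\mu\in\mathbb R$, forcing $x_1=\cdots=x_n=C/n$. The only genuine subtlety here --- and the reason I would prefer the first route --- is that the constraint hyperplane is \emph{non-compact}, so a critical-point analysis by itself does not immediately certify that this point is an \emph{absolute} maximum rather than a saddle; the sum-of-squares reformulation above settles this for free by exhibiting an explicit global upper bound. I do not expect any real obstacle in either approach: the content of the lemma is entirely elementary, and the slight care needed about non-compactness is exactly what the power-sum identity removes.
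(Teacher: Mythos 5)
Your proof is correct, and it reaches the same maximum value $\binom{n}{2}\frac{C^2}{n^2}=\frac{n-1}{2n}C^2$ with the same equality characterization, but by a slightly different route than the paper. The paper parametrizes the affine hyperplane $\{\gamma_1\equiv C\}$ by the first $n-1$ variables, recenters at $(C/n,\dots,C/n)$, and expands $\gamma_2$ directly, exhibiting the deficit as the manifestly nonpositive quadratic form $-\frac12\bigl(\sum_{i=1}^{n-1}x_i\bigr)^2-\frac12\sum_{i=1}^{n-1}x_i^2$. You instead use the Newton-type identity $\gamma_1^2=\sum_i x_i^2+2\gamma_2$ to convert the problem into minimizing $\sum_i x_i^2$ on the hyperplane, which Cauchy--Schwarz settles globally. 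Both arguments are elementary and both yield a genuine \emph{global} bound (so your worry about the non-compactness of the constraint set is correctly handled in either version); your identity-based reduction avoids the explicit parametrization and change of variables, while the paper's computation has the minor virtue of displaying the deficit at an arbitrary point of the hyperplane in closed form. Your closing remark about the Lagrange-multiplier route being insufficient on its own is a fair and accurate caveat, and your preference for the sum-of-squares reformulation is well placed.
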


\begin{proof}
Parametrize the affine hyperplane $\gamma_1\equiv C$ by the first $n-1$ variables. Then, a straightforward computation gives
\begin{multline*}
\gamma_2\biggl(x_1+C/n,\dots,x_{n-1}+C/n,C-\sum_{i=1}^{n-1}(x_i+C/n)\biggr)-\gamma_2(C/n,\dots,C/n) \\
= -\sum_{i=1}^{n-1}x_i^2\quad-\sum_{1\le i<j\le n-1}x_ix_j
\\ = -\frac 12\left(\sum_{i=1}^{n-1}x_i\right)^2-\frac 12\sum_{i=1}^{n-1}x_i^2\le 0,
\end{multline*}
and equality holds if and only if $x_i=0$ for all $i=1,\dots,n$.
The absolute maximum is then 
$$
\gamma_2(C/n,\dots,C/n)={n \choose 2}\frac{C^2}{n^2}.
$$
\end{proof}

Now, consider the quantity 
$$
\frac{\Xi^{r+1}}{(r+1)!}\wedge\frac{\pi^*\omega^{n-2}}{(n-2)!} = \gamma_2(\vartheta^E_h(x,[v])/\omega)\,\frac{\Xi^{r-1}}{(r-1)!}\wedge\frac{\pi^*\omega^{n}}{n!}.
$$
Since $(E,h)$ is Hermite--Einstein, we have $\gamma_1(\vartheta^E_h(x,[v])/\omega)\equiv\lambda_E$, so that
$$
\gamma_2(\vartheta^E_h(x,[v])/\omega)\le \binom{n}{2}\,\frac{\lambda_E^2}{n^2}=\frac{n-1}{2n}\lambda_E^2.
$$ 
Therefore
\begin{equation}\label{final}
\frac{\Xi^{r+1}}{(r+1)!}\wedge\frac{\pi^*\omega^{n-2}}{(n-2)!} \le\frac{n-1}{2n}\lambda_E^2\,\frac{\Xi^{r-1}}{(r-1)!}\wedge\frac{\pi^*\omega^{n}}{n!},
\end{equation}
and equality holds if and only if the eigenvalues of $\vartheta^E_h(x,[v])$ with respect to $\omega$ are all equal to $\lambda_E/n$, that is 
$$
\vartheta^E_h(x,[v])=\frac{\lambda_E}{n}\,\omega.
$$
This means that, for all $x\in X$ and $v\in E_x\setminus\{0\}$, we have
$$
\frac i{2\pi}\,\bigl\langle\Theta_{x}(E,h)\cdot v,v\bigr\rangle_h=||v||_h^2\,\frac{\lambda_E}{n}\,\omega,
$$
or, in other words,
$$
\biggl\langle\biggl(\frac i{2\pi}\,\Theta_{x}(E,h)-\frac{\lambda_E}{n}\,\omega\otimes\operatorname{Id}_E\biggr)\cdot v,v\biggr\rangle_h=0, 
$$
that is
$$
\frac i{2\pi}\,\Theta(E,h)=\frac{\lambda_E}{n}\,\omega\otimes\operatorname{Id}_E.
$$
To conclude, we just need to take the push forward of both sides of (\ref{final}) and use Proposition \ref{segre}, to obtain
$$
\frac{s_2(E,h)}{(r+1)!}\wedge\frac{\omega^{n-2}}{(n-2)!}\le\frac{n-1}{2n}\frac{\lambda_E^2}{(r-1)!}\,\frac{\omega^{n}}{n!},
$$
which is precisely (\ref{divin2}).
\end{proof}

To finish with, let us know briefly explain how to formally derive (\ref{divin2}) from the classical Kobayashi--Lübke inequality. Same notations and hypotheses as in the statement of Theorem \ref{KL}, we have
$$
\begin{aligned}
c_1(E,h)^2\wedge\omega^{n-2}-c_2(E,h)\wedge\omega^{n-2} 
& = \frac 1{2r}\bigl((r+1)\,c_1(E,h)^2 \\
&\qquad+(r-1)\,c_1(E,h)^2-2r\,c_2(E,h)\bigr)\wedge\omega^{n-2} \\
& \le \frac{r+1}{2r}\,c_1(E,h)^2\wedge\omega^{n-2},
\end{aligned}
$$
the last inequality being exactly the content of the Kobayashi--Lübke inequality (and so with equality if and only if $E$ is projectively flat). Now, by the primitive decomposition formula, we write
$$
c_1(E,h)=\eta+f\,\omega,
$$
where $\eta$ is a real $\omega$-primitive $(1,1)$-form, \textsl{i.e.} $\Lambda_\omega\eta\equiv 0$, so that $\eta\wedge\omega^{n-1}\equiv 0$, and $f$ is a smooth real function on $X$. Next, from (\ref{lambda}), we obtain that $f$ is constantly equal to $\frac rn\lambda_E$. Then, 
$$
\begin{aligned}
\frac{r+1}{2r}\,c_1(E,h)^2\wedge\omega^{n-2} & = \frac{r+1}{2r}\biggl(\eta^2\wedge\omega^{n-2}+\lambda_E^2\frac {r^2}{n^2}\,\omega^n\biggr) \\
& = \frac{r(r+1)}{2n^2}\lambda_E^2\,\omega^n + \frac{r+1}{2r}\,\eta^2\wedge\omega^{n-2}.
\end{aligned}
$$
It remains to show that $\eta^2\wedge\omega^{n-2}\le 0$, with equality if and only if $c_1(E,h)=\lambda_E\,\frac rn\,\omega$. For this, let
$$
\eta=i\sum_{j=1}^n\alpha_j\,dz_j\wedge d\bar z_j
$$
be a diagonalization of $\eta$ with respect to $\omega$. Then, $\Lambda_\omega\eta=0$ reads $\sum_j\alpha_j=0$ and, as in the proof of Proposition \ref{sympoly}, we get
$$
\eta^2\wedge\omega^{n-2}=\frac{2!(n-2)!}{n!}\,\sum_{j<k}\alpha_j\alpha_k\,\omega^n.
$$
The conclusion follows once again using Lemma \ref{multlag}.
\section{Final remarks}

Let us finish this note with a few remarks, also in order to underline some of the virtues and shortcomings of the methods presented here.

\begin{remark}
The equality case in (\ref{divin}) gives the projective flatness type condition
$$
\frac{i}{2\pi}\,\Theta(E,h)=\frac{\lambda_E}{n}\,\omega\otimes\operatorname{Id}_E.
$$ 
This easily seen to be actually stronger than the usual projective flatness. Just take any line bundle $L\to X$: of course, as every holomorphic line bundle, $L$ admits a Hermite--Einstein metric $h$ with respect to any $\omega$ and moreover $L$ is trivially projectively flat, but $c_1(L,h)$ cannot equal $\frac{\lambda_L}{n}\,\omega$, unless we had already chosen $\omega$ to be $\frac{n}{\lambda_L}\,c_1(L,h)$ (in particular $i\,\Theta(L,h)$ should have a sign, or be zero). On the other hand, if $(E,h)\to(X,\omega)$ is Hermite--Einstein and projectively flat, then from the classical Kobayashi--Lübke inequality we get
$$
c_2(E,h)\wedge\omega^{n-2}=\frac{r-1}{2r}\,c_1(E,h)^2\wedge\omega^{n-2}.
$$
Now, if we plug the above equality in (\ref{divin}) we obtain
$$
c_1(E,h)^2\wedge\omega^{n-2}\le\lambda_E\,\frac{r}{n}\,c_1(E,h)\wedge\omega^{n-1}=\biggl(\lambda_E\,\frac rn\biggr)^2\,\omega^n.
$$
This last inequality thus can be thought to somehow measure how far $\frac 1r\,c_1(E,h)$ is from being $\frac{\lambda_E}{n}\,\omega$.
\end{remark}

\begin{remark}
If $(E,h)\to (X,\omega)$ is Hermite--Einstein with slope $\lambda_{E,[\omega]}$, then it is also Hermite--Einstein with respect to $t\omega$ for any positive real number $t$ and 
$$
\lambda_{E,[t\omega]}=\frac 1t\,\lambda_{E,[\omega]}.
$$
Thus, for the sake of simplicity, we can normalize $\omega$ and suppose that it has total mass $\int_X \omega^{\dim X}=1$. Suppose $(X,\omega)$ is a smooth compact Kähler surface. In this situation, the classical (integrated) Kobayashi--Lübke inequality reads
$$
c_1(E)^2\le\frac{2r}{r-1}\,c_2(E),
$$
whilst inequality (\ref{divin2}) becomes
$$
c_1(E)^2\le c_2(E)+\lambda_E^2\frac{r(r+1)}8.
$$
If $c_1(E)^2\le 0$ and $c_2(E)\ge 0$, then the two inequalities don't give any further information. On the other hand, if both $c_1(E)^2$ and $c_2(E)$ have the same sign, then the latter is stronger than the former whenever
$$
c_2(E)+\lambda_E^2\frac{r(r+1)}8<\frac{2r}{r-1}\,c_2(E),
$$
that is, as soon as
\begin{equation}\label{stronger}
c_2(E)>\frac{r-1}{2r}\,\bigl(c_1(E)\cdot[\omega]\bigr)^2.
\end{equation}
Thus, when both Chern numbers positive, inequality (\ref{stronger}) provides a non trivial condition which ensures that the Kobayashi--Lübke inequality in its classical incarnation is actually weaker than (\ref{divin2}). This is of course of some usefulness only if one is able to compare \textsl{a priori} the second Chern number of $E$ with its slope.
\end{remark}

\begin{remark}
As pointed out in \cite{Miy91}, according to \cite{Mar77,Mar78}, if we fix the base space $X$, the rank $r$, $c_1$ and $c_2$, the isomorphism classes of stable vector bundles are parametrized by a finite dimensional quasi-projective variety and, in particular, the possibilities of higher Chern classes are finite. Nevertheless, it would be quite useful for Riemann--Roch type computations to find natural inequalities between higher Chern classes of a (semi)stable vector bundle \cite[Problem 4.1]{Miy91}. Unfortunately, the methods presented in this note cannot be straightforwardly adapted to find such inequalities. The reason is that in Lemma \ref{multlag} nothing can be said about the boundedness of higher elementary symmetric polynomials once only the first is supposed to be constant. One could be then led to consider \lq\lq higher order Hermite--Einstein metrics\rq\rq{}, \textsl{i.e.} hermitian metrics $h$ on $E$ such that for some $\ell=1,\dots,n$ the $\gamma_k(\vartheta_h^E/\omega)$'s are constant for $1\le k\le \ell$. Let us say that such a metric $h$ is $\ell$-Hermite--Einstein with respect to $\omega$ (with this definition, thanks to Proposition \ref{hepe}$, (E,h)$ is Hermite--Einstein with respect to $\omega$ if and only if it is $1$-Hermite--Einstein). In our opinion, this may definitely be worth to be investigated, especially in connection with recent developments in the theory of Hessian equations on compact Kähler manifolds (see for instance, juste to cite a few, \cite{Blo05,Hou09,Lu13}).
\end{remark}

\begin{remark}
When $E=T_X$ and $\omega$ is a Kähler--Einstein metric on $X$, then the Guggenheimer--Yau inequality states
$$
\bigl(n\,c_1(X,\omega)^2-(2n+2)\,c_2(X,\omega)\bigr)\wedge \lambda_{T_X}\,c_1(X,\omega)^{n-2}\le 0,
$$
if $\lambda_{T_X}\ne 0$, and $c_2(X,\omega)\wedge\omega^{n-2}\ge 0$, if $\lambda_{T_X}=0$. This stronger Kobayashi--Lübke type inequality relies on the additional symmetries, in the specific case of the tangent bundle, that the curvature tensor acquires whenever computed starting from a Kähler metric. It seems to us that it is not possible to derive such an inequality with our methods, since it is not clear how to take advantage of these further symmetries just looking at the curvature of the tautological bundle on the projectivized bundle of lines of $T_X$.
\end{remark}

\begin{remark}
It is tempting to apply the same kind of techniques on other fiber bundles, such as Grassmannian bundle or, more generally, flag bundles associated to $E$. This would possibly give other inequalities on Chern classes of $E$, as well as ---in the spirit of Guler's theorem mentioned in the introduction--- positivity of more general combinations of Chern classes beside the signed Segre forms, in the case of a Griffiths positive vector bundle. This issue will be addressed in a forthcoming paper.
\end{remark}

\bibliography{bibliography}{}

\end{document}